\newtheorem {thm}{Theorem}[section]
\newtheorem {prop}[thm]{Proposition}
\newtheorem {conj}[thm]{Conjecture}
\newtheorem{cor}[thm]{Corollary}
\newtheorem {lem}[thm]{Lemma}
\newtheorem*{theorem*}{Theorem}
\newtheorem {question}{Question}
\theoremstyle{definition}
\newtheorem{defn}[thm]{Definition}
\newtheorem{ex}[thm]{Example}
\theoremstyle{remark}
  \newtheorem{remark}[thm]{Remark}
\numberwithin{equation}{section}
\newcommand{\K}{{\mathbb K}}
\newcommand{\T}{{\mathbb T}}
\newcommand{\Q}{{\mathbb Q}}
\newcommand{\Z}{{\mathbb Z}}
\newcommand{\C}{{\mathbb C}}
\newcommand{\R}{{\mathbb R}}
\newcommand{\F}{{\mathbb F}}
\newcommand{\calK}{{\mathcal K}}
\newcommand{\tK}{{ \widetilde{\mathcal K}}}
\newcommand{\calA}{{\mathcal A}}
\newcommand{\calH}{{\mathcal H}}
\newcommand{\calQ}{{\mathcal Q}}
\newcommand{\calI}{{\mathcal I}}
\newcommand{\calS}{{\mathcal S}}
\newcommand{\Fq}{{\mathbb F_{q}}}
\newcommand{\Fp}{{\mathbb F_{p}}}
\newcommand{\GN}{{\mathbb G_{N}}}
\newcommand{\GM}{{\mathbb G_{M}}}
\newcommand{\GL}{{\mathbb G_{L}}}
\newcommand{\G}{{\mathbb G}}
\renewcommand{\mod}{\textup{mod}\hspace{0.5mm}}
\newcommand{\ord}{\textup{ord}\,}
\newcommand{\res}{\textup{res}}
\newcommand{\usum}{\displaystyle \sum}
\newcommand{\mmod}[1]{\,\,(\text{mod}\,\,#1)}
\begin{document}

\title [Equidistribution of polynomial sequences in function fields]
{{\bf Equidistribution of polynomial sequences in\\ 
function fields, with applications}}

\author{Th\'ai Ho\`ang L\^e}
\address{T. H. L\^e, Department of Mathematics,
University of Mississippi,
305 Hume Hall,
University, MS 38677, USA}
\email{leth@olemiss.edu}

\author{Yu-Ru Liu}
\address{Y.-R. Liu, Department of Pure Mathematics,
University of Waterloo,
200 University Avenue West, 
Waterloo, ON, N2L 3G1, Canada }
 \email{yrliu@math.uwaterloo.ca}

\author{Trevor D. Wooley}
\address{T. D. Wooley, Department of Mathematics,
Purdue University,
150 N. University Street,
West Lafayette, IN 47907, USA}
\email{twooley@purdue.edu}

\thanks{The second author is supported by an NSERC discovery grant. The third author is 
supported by NSF grants DMS-1854398 and DMS-2001549.}

\keywords{Equidistribution, function fields, 
  intersective sets, van der Corput sets, Glasner sets}
\subjclass[2010]{11J71, 11T55.}

\begin{abstract}We prove a function field analog of Weyl's classical theorem on 
equidistribution of polynomial sequences. Our result covers the case in which the degree 
of the polynomial is greater than or equal to the characteristic of the field, which is a 
natural barrier when applying the Weyl differencing process to function fields. We also 
discuss applications to van der Corput, intersective and Glasner sets in function fields.
\end{abstract}

\maketitle

\section{Introduction} \label{sec:intro}
Equidistribution theory started with Weyl's seminal paper \cite{weyl2}. We recall that a 
sequence $(a_{n})_{n=1}^{\infty}$ of real numbers is said to be \textit{equidistributed} 
$(\mod{1})$ if for any interval $[\alpha,\beta] \subset [0,1)$, we have
\[
\lim_{N \rightarrow \infty}N^{-1}\text{card}\left\{n\in [1,N]\cap \Z^+: 
\{ a_n \}\in [\alpha, \beta]\right\}= \beta-\alpha .
\]
Here, we write $\Z^+$ for the set of positive integers and $\{a\}$ for the fractional part 
of a real number $a$, which is to say $a-\lfloor a\rfloor$, where $\lfloor a\rfloor$ denotes 
the largest integer not exceeding $a$. Write $e(x)=e^{2\pi ix}$. Then 
\textit{Weyl's criterion} asserts that the sequence $(a_{n})_{n=1}^{\infty}$ is 
equidistributed $(\mod 1)$ if and only if for any integer $m \neq 0$, we have
\[
\lim_{N \rightarrow \infty}\frac{1}{N}\biggl|\sum_{n=1}^N e(ma_{n})\biggr| =0.
\]

\par Let $f(u)=\sum_{r=0}^k\alpha_r u^r$ be a polynomial with real coefficients having 
degree $k$. Weyl made the important observation that by squaring the sum 
$\big| \sum_{n=1}^N e(f(n))\big|$, one can estimate it in terms of other exponential 
sums involving the shift $f(u+h)-f(u)$, which is, for each $h\in \Z^+$, a polynomial of 
degree $k-1$. This process is called \textit{Weyl differencing}. If one continues the 
differencing process, then the polynomial in question becomes linear after $k-1$ steps. 
Using this observation, Weyl \cite{weyl2} proved that the sequence 
$(f(n))_{n=1}^\infty$ is equidistributed $(\mod 1)$ if and only if at least one of the 
coefficients $\alpha_1,\ldots,\alpha_k$ of $f$ is irrational. The proof of this result was 
later simplified with the help of \textit{van der Corput's difference theorem} \cite{vdc}, 
which shows that, if for any $h\in \Z^+$ the sequence $(a_{n+h}-a_{n})_{n=1}^\infty$ 
is equidistributed $(\mod 1)$, then the sequence $(a_{n})_{n=1}^\infty$ is also 
equidistributed $(\mod 1)$. Using van der Corput's difference theorem, Weyl's 
equidistribution theorem for polynomials follows easily by induction on the degree of the 
polynomial. This remains to date the standard proof of Weyl's result.\par

Denote by $\F_q$ the finite field of $q$ elements whose characteristic is $p$ and let 
$\F_q[t]$ be the polynomial ring over $\F_q$. Since $\Z$ and $\F_q[t]$ share many 
similarities from analytic and number-theoretic points of view, it is natural to study 
equidistribution in the latter setting. Let $\K =\F_q(t)$ be the field of fractions of 
$\F_q[t]$. When $f/g\in \K$, with $f,g\in \F_q[t]$ and $g\ne 0$, we define a norm
$|f/g|=q^{\deg f-\deg g}$ (with the convention that $\deg 0=-\infty$). The completion of 
$\K$ with respect to this norm is $\K_\infty =\F_q((1/t))$, the field of formal Laurent 
series in $1/t$. In other words, every element $\alpha \in \K_\infty$ can be written in the 
form $\alpha =\sum_{i=-\infty}^n a_i t^{i}$ for some $n \in \Z$ and $a_i \in \F_q$ 
$(i\le n)$. Therefore, one sees that $\F_q[t]$, $\K$, $\K_\infty$ play the roles of $\Z$, 
$\Q$, $\R$, respectively. Let 
\[
\T= \Biggl\{ \sum_{i\le -1}a_i t^i: a_i \in \Fq\,\, (i \le -1)\Biggr\}.
\]
This compact group is the analog of the unit interval $[0,1)$. Let $\lambda$ be a 
normalized Haar measure on $\T$ such that $\lambda(\T)=1$. For $M \in \Z^+$, let 
$I=(c_1,\ldots,c_M)$ be a finite sequence of elements of $\F_q$. A set of the form 
\[
\mathcal{C}_I=\Biggl\{ \sum_{i\le -1}a_i t^i\in \T: \text{$a_i=c_{-i}$ 
$(-M\le i\le -1)$}\Biggr\}
\]
satisfies $\lambda(\mathcal{C}_I)=q^{-M}$. Thus, we refer to the set $\mathcal{C}_I$ 
as a \textit{cylinder set of radius} $q^{-M}$. The topology on $\T$ induced by the norm 
$|\cdot|$ is generated by cylinder sets. Therefore, cylinder sets play the role of intervals.

\par For $\alpha =\sum_{i=-\infty}^{n} a_it^{i}\in \K_\infty$ with $a_n\neq 0$, we 
define $\ord \alpha  =n$. Therefore, one has $|\alpha | = q^{\ord \alpha}$. We say 
$\alpha$ is \textit{rational} if $\alpha \in \K$ and \textit{irrational} if 
$\alpha \not \in \K$. We define $\{ \alpha \} =\sum_{i\le -1}a_i t^{i}\in \T$ to be the  
{\it fractional part} of $\alpha$, and we refer to $a_{-1}$ as the \textit{residue} of 
$\alpha$, denoted by $\text{res}\,\alpha$. Next we define the exponential function on 
$\K_\infty$. Let $\text{tr}:\F_q\rightarrow \F_p$ denote the familiar trace map given by
\[
\text{tr}(a)=a+a^p+a^{p^2}+\ldots +a^{p^{m-1}},
\]
in which we suppose that $q=p^m$. There is a non-trivial additive character 
$e_q:\F_q\rightarrow \C^\times$ defined for each $a\in \F_q$ by taking 
$e_q(a)=e(\text{tr}(a)/p)$. This character induces a map, which we also denote by 
$e(\cdot)$, from $\K_\infty$ to $\C^\times$ by defining, for each element 
$\alpha\in \K_\infty$, the value of $e(\alpha )$ to be $e_q(\text{res}\,\alpha )$. For 
$N\in \Z^+$, we write $\G_N$ for the set of all polynomials in $\F_q[t]$ having degree 
smaller than $N$. The following notion of equidistribution was first introduced by Carlitz in 
\cite{carlitz} (see also \cite[Chapter 5, Section 3]{kn}).

\begin{defn}\label{definition1.1}
Let $(a_x)_{x \in \Fq[t]}$ be a sequence indexed by $\Fq[t]$ and taking values in 
$\K_{\infty}$. We say that $(a_x)_{x \in \Fq[t]}$ is {\it equidistributed} in $\T$ if for any 
cylinder set $\mathcal{C}\subset\T$, we have
\[
\lim_{N \rightarrow \infty} q^{-N}\text{card} \left\{ x \in \G_N :\{a_x\} \in \mathcal{C} 
\right\}= \lambda(\mathcal{C}).
\]
\end{defn}

Since one can prove analogs of Weyl's criterion and van der Corput's difference theorem in 
function fields, one expects to establish an $\F_q[t]$-analog of Weyl's equidistribution 
theorem for polynomial sequences. Let $f(u)=\sum_{r=0}^k \alpha_r u^r$ be a 
polynomial with coefficients in $\K_\infty$ having degree $k$. All earlier works on 
equidistribution in $\T$ have been restricted to the case in which $k<p$. Under this 
condition, Carlitz \cite{carlitz} proved an analog of Weyl's equidistribution theorem for the 
sequence $(f(x))_{x\in \F_q[t]}$. Dijksma \cite{dijksma} also established the same result 
for another stronger notion of equidistribution, subject to the same constraint $k<p$. In 
the work of both Carlitz and Dijksma, the use of Weyl differencing produces a factor of 
$k!$. When $k\ge p$, the latter factor is $0$, and hence this differencing method 
becomes ineffective in producing the desired equidistribution result. Actually, the following 
example, already known to Carlitz \cite[equation (6.8)]{carlitz}, shows that a direct 
$\Fq[t]$-analog of Weyl's equidistribution theorem is not always true when $k\ge p$. 

\begin{ex}\label{ex:1}
For $\alpha =\sum_{i=-\infty}^n a_i t^{i}\in \K_\infty$, define
\begin{equation}\label{eq:t}
T(\alpha)=a_{-1}t^{-1}+a_{-p-1}t^{-2}+a_{-2p-1}t^{-3}+\cdots . 
\end{equation}
Then $T$ is a linear map from $\K_\infty$ to $\T$ (this map will also be used in Section 
\ref{sec:equidistribution}). By setting $a_{-1}=a_{-p-1}=\ldots =0$, a countability argument 
shows that we can find an irrational element $\alpha\in \K_\infty$ with $T(\alpha)=0$. Given 
such an irrational element $\alpha$, it follows that for any element 
$x=\sum_{i=0}^{m} x_{i} t^{i}$ of $\F_q[t]$, the coefficient of $t^{-1}$ in 
$\alpha x^{p}$ is equal to
\[
a_{-1}x_0^{p} + a_{-p-1}x_1^p + a_{-2p-1}x_2^p+ \cdots =0,
\]
and thus the sequence $(\alpha x^p)_{x \in \F_q[t]}$ is not equidistributed in $\T$.
\end{ex}

It is desirable to give a complete description of all polynomials $f(u)\in \K_\infty [u]$ for 
which the sequence $(f(x))_{x \in \F_q[t]}$ is equidistributed in $\T$. However, in view of 
Example \ref{ex:1}, such a description may be complicated and not easy to state in such 
arithmetic terms as irrationality. In particular, equidistribution could fail if the degree of 
$f(u)$ is divisible by $p$. Furthermore, for a polynomial such as $\alpha x^p +\beta x$, it 
is not possible to determine whether or not one has equidistribution if one is equipped 
with information concerning $\alpha$ or $\beta$ alone, since the terms $x^p$ and $x$ 
``interfere" with one another, as the map $x \mapsto x^p$ is linear (see also 
\cite[equation (6.9)]{carlitz}). However, one may suspect that the only pathologies that 
prevent equidistribution are the ones described above (namely, exponents divisible by $p$ 
and interfering exponents). Thus one can make the following conjecture, which is the best 
possible insofar as irrationality hypotheses are imposed on a single coefficient. 

\begin{conj}\label{conj}
Let $\calK$ be a finite set of positive integers, suppose that $\alpha_r\in \K_\infty$ for 
$r\in \calK\cup \{0\}$, and define
\[
f(u)=\sum_{r \in \calK\cup\{0\}}\alpha_r u^{r}.
\]
Suppose that $\alpha_k$ is irrational for some $k \in \calK$ satisfying $p\nmid k$ and 
furthermore $p^v k\not \in \calK$ for any $v\in \Z^+$. Then the sequence 
$(f(x))_{x \in \F_q[t]}$ is equidistributed in $\T$.
\end{conj}

In this paper, we make some progress towards this conjecture. Given a set of positive 
integers $\calK$, we define the \textit{shadow} of $\calK$ to be the set  
\[
\mathcal{S}(\calK)=\biggl\{ j\in \Z^+: \text{$p\nmid  \binom{r}{j}$ for some $r\in 
\calK$}\biggr\}.
\]
Here, as usual, we adopt the convention that $\binom{r}{j}=0$ when $j>r$. Note in 
particular that whenever $\calK$ is a set of positive integers, then 
$\calK\subseteq \calS (\calK)$. We provide a convenient interpretation of the shadow of 
$\calK$ in the preamble to Lemma \ref{lem:shadow} that makes for easy computation in 
terms of the base $p$ digital expansions of the elements of $\calK$. We may now 
announce our main equidistribution result, which has no restriction on the degree of the 
polynomial $f(u)$ in question. 

\begin{thm}\label{th:main3}
Let $\calK$ be a finite set of positive integers, suppose that $\alpha_r\in \K_\infty$ for 
$r\in \calK\cup \{0\}$, and define
\[
f(u)=\sum_{r \in \calK\cup\{0\}}\alpha_r u^{r}.
\]
Suppose that  $\alpha_k$ is irrational for some $k \in \calK$ satisfying $p\nmid k$ and 
furthermore $p^v k\not \in \mathcal{S}(\calK)$ for any $v\in \Z^+$. Then the sequence 
$(f(x))_{x \in \Fq[t]}$ is equidistributed in $\T$.
\end{thm}

\begin{ex}\label{example-2}
If $k$ is the largest element of a finite set of positive integers $\calK$, and furthermore 
$p\nmid k$ and $\alpha_k$ is irrational, then Theorem \ref{th:main3} shows that the 
sequence $(f(x))_{x\in \F_q[t]}$ is equidistributed in $\T$. More generally, let 
$f(u)=\sum_{r=0}^k \alpha_r u^r\in \K_\infty[u]$, and suppose that $\alpha_r$ is 
irrational for some integer $r$ with $k/p<r\le k$ and $p\nmid r$. Then, as a direct 
consequence of Theorem \ref{th:main3}, the sequence $(f(x))_{x \in \F_q[t]}$ is 
equidistributed in $\T$.  
\end{ex}

\begin{ex}\label{example-new}
Consider the situation in which $q=3$ and $\calK =\{ 7, 11, 45\}$. A modest computation 
confirms that $\calS(\calK)=\{1,2,3,4,6,7,9,10,11,18,27,36,45\}$. By applying Theorem 
\ref{th:main3}, we see that the sequence 
$(\alpha x^{45}+\beta x^{11}+\gamma x^7)_{x \in \F_q[t]}$ is equidistributed in $\T$ 
if either $\beta$ or $\gamma$ is irrational. This fact does not follow from Example 
\ref{example-2} because $11<45/3$.
\end{ex}

\begin{ex}\label{example-3} 
Suppose that $p>3$ and $\alpha,\beta,\gamma\in \K_\infty$ with $\beta $ irrational. We 
consider the situation with $\calK=\{1,3,3p+1\}$. Since $3p\in \calS(\calK)$, we find that 
Theorem \ref{th:main3} does not imply directly the equidistribution of the sequence 
$(\alpha x+\beta x^3+\gamma x^{3p+1})_{x \in \F_q[t]}$. However, we will prove a 
more general form of Theorem \ref{th:main3} (see Proposition \ref{prop:varmain3} 
below), and from this one can conclude that the above sequence is equidistributed in 
$\T$. In contrast, we are not able to confirm that the sequence 
$(\beta x^3+\gamma x^{4p})_{x \in \F_q[t]}$ is equidistributed in $\T$, although 
Conjecture \ref {conj} suggests that such should be the case.
\end{ex}

\begin{remark}\label{rem:bg1} A result similar to that in Example \ref{example-2} was 
proved independently by Bergelson and Leibman \cite[Corollary 0.5]{bl} using a different 
method. See the discussion concluding this section for a comparison of the latter results 
with those contained in this paper.
\end{remark}

As experts will anticipate, our proof of Theorem \ref{th:main3} is based on an estimate 
for the sum $|\sum _{x \in\G_N}e(f(x))|$ of minor arc type. By combining the large sieve 
inequality  with a generalization of Vinogradov's mean value theorem to the setting of 
$\F_q[t]$, we obtain a Weyl-type estimate which avoids the problematic use of Weyl 
differencing. This approach allows us to surmount the barriers that previously obstructed 
viable conclusions when the degree of $f(u)$ exceeds or is equal to $p$. The assumption 
$p^v k\not \in \mathcal{S}(\calK)$ in Theorem \ref{th:main3} comes from the use of the 
Weyl shift in our minor arc estimate. The latter produces terms whose degrees may lie 
throughout the set $\mathcal{S}(\calK)$, instead of being restricted to the potentially 
smaller set $\calK$ (see equation \eqref{f-shift}). Therefore, we need to consider a mean 
value estimate whose associated indices are elements of $\mathcal{S}(\calK)$. Such an 
``extension of indices'' is a common theme in the study of Diophantine problems. It 
occurs, for example, in Vinogradov's approach to the asympotic formula in Waring's 
problem, where one relates an equation involving $k$-th powers to Vinogradov's system 
of equations having degrees ranging from $1$ to $k$ (see \cite[Section 5.3]{vaughan} for 
more details). In the situation of Theorem \ref{th:main3}, it requires the stronger 
assumption $p^v k\not \in \mathcal{S}(\calK)$, instead of $p^vk \not \in\calK$. Although, 
for this reason, we are unable to prove Conjecture \ref{conj} in general, we can confirm it 
in the special case when $q=p$ (see Corollary \ref{cor:q=pw}). This follows from a more 
general form of Theorem \ref{th:main3} which we present in Proposition 
\ref{prop:varmain3} and Corollary \ref{cor:q=p}.\par

Our equidistribution result is applicable in virtually any situation involving some notion of 
equidistribution for polynomials in $\T$. In particular, in Sections \ref{sec:vdc} and 
\ref{sec:glasner}, we investigate some special sets in $\Fq[t]$ closely related to 
equidistribution and presently less well understood than their integer counterparts. These 
are van der Corput, intersective and Glasner sets. An accessible consequence of this work 
is the following result, which is a consequence of our Theorem \ref{th:vdc1}, established in 
Section 6. 

\begin{thm}\label{th:sarkozy}
Let $\calK$ be a finite set of positive integers, suppose that $a_r\in \F_q[t]$ for 
$r\in \calK\cup \{0\}$, and define
\[
\Phi(u)=\sum_{r \in \calK\cup \{0\}}a_r u^{r}.
\]
Suppose that $\Phi(u)$ has a root modulo $g$ for any $g\in \F_q[t]\setminus \{0\}$. 
Suppose further that $a_k\neq 0$ for some $k\in \calK$ satisfying $p\nmid k$ and 
$p^v k\not\in \mathcal{S}(\calK)$ for any $v \in \Z^+$. Then for any subset 
$\mathcal{A}$ of positive upper density in $\F_q[t]$, there exist distinct elements 
$a$ and $a'$ of $\mathcal{A}$, and some $x\in \F_q[t]$, for which $a-a'=\Phi(x)$. 
\end{thm}

Polynomials $\Phi$ having a root modulo $g$ for any $g \in \F_q[t]\setminus \{0\}$ are 
called \textit{intersective}. The above theorem is an $\F_q[t]$-analog of a result of 
S\'{a}rk\"{o}zy \cite{sarkozy1}. Previously, such a result with no restriction on the degree 
of $\Phi$ was not available, except in cases where $\Phi(0)=0$ \cite{blm} (see also 
\cite{green}). We refer the reader to Section \ref{sec:vdc} for an introduction to 
intersective and van der Corput sets and for the statement of our results.

\begin{remark}\label{rem:bg2} A result similar to Theorem \ref{th:sarkozy} was proved 
independently by Bergelson and Leibman \cite[Theorem 9.5]{bl} using different methods. 
Bergelson and Leibman also addressed a notion of intersective polynomials, although their 
notion differs from ours. It is a nontrivial problem to determine if these two notions are 
one and the same. We refer the reader to Question \ref{q:intersective} in Section 6 and 
the associated discussion for an account of similarities and differences between our 
Theorem \ref{th:sarkozy} and \cite[Theorem 9.5]{bl}. 
\end{remark}

Our next application concerns Glasner sets in $\F_q[t]$. Generalizing a result of Glasner, 
it was shown by Alon and Peres \cite{ap} that given a non-constant polynomial 
$\Phi(u)\in \Z[u]$, for any infinite subset $Y$ of $\R/\Z$ and any $\epsilon>0$, there 
exists $n\in \Z$ such that the set $\Phi(n)Y = \{ \Phi(n)y: y\in Y\}$ intersects any interval 
of length $\epsilon$ in $\R/\Z$. In view of Example \ref{ex:1} and the discussion 
preceding Conjecture \ref{conj}, it is not surprising that an exact analog of the result of 
Alon and Peres over $\F_q[t]$ is \textit{not} true in general. We establish the following 
$\F_q[t]$-analog of the latter result.

\begin{thm}\label{th:glasner}
Let $\calK$ be a finite set of positive integers, suppose that $a_r\in \F_q[t]$ for 
$r\in \calK\cup \{0\}$, and define
\[
\Phi(u)=\sum_{r \in \calK\cup \{0\}}a_r u^{r}.
\]
Suppose that  $a_k\neq 0$ for some $k \in \calK$ satisfying $k>1$ with $p\nmid k$, and 
furthermore $p^v k\not \in \mathcal{S}(\calK)$ for any $v\in \Z^+$. Then for any infinite 
subset $Y\subset \T$ and any $M\in \Z^+$, there exists $x\in \F_q[t]$ having the 
property that the set $\Phi(x)Y$ intersects any cylinder set of radius $q^{-M}$ in $\T$.
\end{thm}

This theorem is a restatement in different language of Theorem \ref{theoremw21}, which is 
itself an immediate consequence of Theorem \ref{th:glasner2}. We refer the reader to 
Section \ref{sec:glasner} for an introduction to Glasner sets and for the statement and 
proof of our results.\par

We conclude this section with a brief comparison between the results of Bergelson and 
Leibman and the results of this paper. As mentioned earlier in Remarks \ref{rem:bg1} and 
\ref{rem:bg2}, some results in this paper were obtained independently by Bergelson and 
Leibman \cite{bl}, at about the same time as an earlier version of this 
memoir\footnote{The first version of our paper was posted on arxiv 
(https://arxiv.org/abs/1311.0892) in November 2013.}, using rather different methods. 
The approach of Bergleson and Leibman is qualitative and very general. Their main result, 
\cite[Theorem 0.3]{bl}, concerns multi-dimensional tori $\T^c$. It asserts that any 
(multi-variate) polynomial sequence in $\T^c$ is equidistributed in a finite union of cosets 
of a subgroup of $\T^c$. It also gives a condition for when a polynomial sequence is 
equidistributed in the full torus. However, this condition is not easy to check in practice for 
a given polynomial and we do not know if \cite[Theorem 0.3]{bl} implies our Theorem 
\ref{th:main3}. There are two important features of our own work. First, our method 
(which relies on the large sieve inequality and Vinogradov's Mean Value Theorem) offers 
scope for \textit{quantitative} applications. For example, it was used by Yamagishi in work 
on Diophantine approximation \cite{yamagishi2} and Waring's problem over $\F_q[t]$ 
\cite{yamagishi1}. Second, the flexibility of our approach makes it applicable to variants of 
Weyl sums in which summands are restricted in various ways. Indeed, in recent work with 
Zhenchao Ge \cite{gll}, the first and second authors extend the methods of the current 
paper to study Weyl sums over the set ${\mathbb I}_q$ of monic irreducible elements in 
$\Fq[t]$, thereby obtaining equidistribution results for the sequence 
$(f(x))_{x \in \mathbb I_q}$ with concomitant conclusions for allied Diophantine and 
combinatorial problems.\par 

This paper is organized as follows. In Section \ref{sec:prelim} we introduce the 
preliminary infrastructure needed to prove our results. We prove an estimate of minor arc 
type in Section \ref{sec:weyl} and derive an extension of this conclusion suitable for our 
subsequent applications in Section \ref{sec:weyl2}. Then, in Section 
\ref{sec:equidistribution}, we apply these estimates to prove Theorem \ref{th:main3}. 
Finally, in Sections \ref{sec:vdc} and \ref{sec:glasner}, we discuss applications of our 
equidistribution results to van der Corput, intersective and Glasner sets over $\F_q[t]$.\par

\noindent{\bf Acknowledgements:} We are grateful to Vitaly Bergelson for explaining 
aspects of the paper \cite{bl}, and to Bhawesh Mishra for interesting conversations related 
to the topic of our paper and directing us to \cite{AB2023}.

\section{Preliminaries}\label{sec:prelim}  
We begin this section by reviewing an orthogonality relation for the function $e(\cdot)$ 
defined in Section 1. As is explained in \cite[Lemma 7]{Ku}, for example, when 
$\alpha\in \K_{\infty}$, we have
\begin{equation}\label{eq:orthogonal1}
\sum_{x \in \G_N}e(x\alpha)=\begin{cases}
q^N,&\text{when $\ord \{\alpha \}<-N$,}\\
0,&\text{when $\ord \{\alpha \}\ge -N$.}
\end{cases}
\end{equation}              
Therefore, for any polynomials $a, g\in \F_q[t]$ with $g\neq 0$, we have
\begin{equation}\label{eq:orthogonal2}
\sum_{\ord x<\ord g}e\left(\frac{xa}{g}\right)=
\begin{cases}
|g|,&\text{when $a\equiv 0\mmod{g}$,}\\
0,&\text{otherwise.}
\end{cases}
\end{equation}

As promised in the preamble to the statement of Theorem \ref{th:main3}, we now 
interpret the shadow $\calS(\calK)$ of a set of indices $\calK$ in a manner that eases 
explicit computations. First, given $j,r\in \Z^+$, we write $j\preceq_p r$ when 
$p\nmid \binom{r}{j}$. By Lucas' theorem, the latter holds precisely when all of the digits 
of $j$ in base $p$ are less than or equal to the corresponding digits of $r$. From this 
characterization, it is easy to see that the relation $\preceq_p$ defines a partial order on 
$\Z^+$. Note in particular that if $j\preceq_p r$, then we necessarily have $j\leq r$. 
Equipped with this notation, we see that
\begin{equation}\label{w1}
\mathcal{S}(\calK)=\left\{ j\in \Z^+: \text{$j\preceq_p r$ for some $r\in \calK$}\right\}.
\end{equation}
This interpretation makes clear the origin of the elements of $\calS(\calK)$ occurring in 
Example \ref{example-new}. Thus, in transparent notation, the base $10$ number $7$ 
has base $3$ expansion $(21)_3$, and thus $\calS(\calK)$ must contain the numbers 
$7=(21)_3$, $6=(20)_3$, $4=(11)_3$, $3=(10)_3$ and $1=(1)_3$. Likewise, the base 
$10$ number $11$ has base $3$ expansion $(102)_3$, and hence $\calS(\calK)$ must 
contain the numbers $11=(102)_3$, $10=(101)_3$, $9=(100)_3$, $2=(2)_3$ and 
$1=(1)_3$. Finally, the base $10$ number $45$ has base $3$ expansion $(1200)_3$, and 
hence $\calS(\calK)$ contains the numbers $45=(1200)_3$, $36=(1100)_3$, 
$27=(1000)_3$, $18=(200)_3$ and $9=(100)_3$.\par

Our conclusions concerning estimates of Weyl-type and associated equidistribution results 
extend beyond those announced in Theorem \ref{th:main3}. For ease of reference, we 
take the opportunity here to collect together the definitions of certain subsets of the set of 
indices $\calK$ making an appearance later in this paper. First, define 
\begin{equation}\label{eq:k-star}
\calK^*=\left\{ k \in \calK: \text{$p\nmid k$ and $p^v k\not \in \mathcal{S}(\calK)$ for 
any $v\in \Z^+$}\right\}.
\end{equation}
The set $\calK^*$ is therefore the subset of $\calK$ that is compatible with an application 
of Theorem \ref{th:main3}, namely the subset of $\calK$ consisting of indices, no 
non-trivial $p$-power multiple of which lies in the shadow of $\calK$. The set 
$\calK\setminus \calK^*$ consists of indices not immediately accessible to Theorem 
\ref{th:main3}. However, if we throw out the accessible exponents $\calK^*$ and treat 
the remaining set $\calK\setminus \calK^*$ in isolation, it may well be that a new set 
$(\calK\setminus \calK^*)^*$ can be identified itself accessible to Theorem 
\ref{th:main3}, and this process can be iterated. We are therefore led to define the set 
$\tK$ as follows. We put $\calK_0 = \calK$, and inductively define for each $n \ge 1$ the 
set
\[
\calK_n=\calK_{n-1}\setminus \calK_{n-1}^*.
\] 
We then define the set of indices 
\begin{equation}\label{eq:ktilde}
\tK =\bigcup_{n=0}^{\infty} \calK_{n}^*.
\end{equation}
We show in Proposition \ref{prop:varmain3} that the conclusion of Theorem \ref{th:main3} 
may be extended so that indices $k$ remain accessible throughout the set $\tK$, instead of 
being constrained to lie in $\calK^*$.\par
 
Next, consider a set $\calK \subset \Z^+$. We say that an element $k\in \calK$ is 
{\it maximal} if it is maximal with respect to the partial ordering $\preceq_p$. Thus, for 
any $r\in \calK$, one has either $r\preceq_p k$ or else $r$ and $k$ are not 
comparable. We record for future reference the following observations concerning the 
partial ordering $\preceq_p$.

\begin{lem}\label{lem:shadow}
Suppose that $\calK \subset \Z^{+}$. Then the following hold.
\begin{enumerate}[topsep=-5pt,itemsep=-1ex,partopsep=1ex,parsep=1ex]
\item[(a)] The index $k$ is maximal in $\mathcal{S}(\calK)$ whenever $k$ is maximal in 
$\calK$;
\item[(b)] One has $\calK^* \subset \mathcal{S}(\calK)^*$;
\item[(c)] If $k\in \calK^*$, and $j\in \calK$ satisfies $ k\preceq_pj$, then $j\in \calK^*$.
\end{enumerate}
\end{lem}

\begin{proof} The maximality property (a) is immediate from the definition of 
$\mathcal{S}(\calK)$. Property (b), meanwhile, follows from the definition 
\eqref{eq:k-star} of $\calK^*$ on observing that 
$\mathcal{S}(\mathcal{S}(\calK))=\mathcal{S}(\calK)$. Finally, under the hypotheses of 
part (c), we have $p\nmid k$ and $p\nmid \binom{j}{k}$. By Lucas' theorem, it follows 
that $p\nmid j$. A second application of Lucas' theorem reveals that for any $v\in \Z^+$, 
we have $p^v k\preceq_p p^v j$. If we were to have $p^v j\in \mathcal{S}(\calK)$ for 
some $v\in \Z^+$, then for some $r\in \calK$ we would have 
$p^v k\preceq_p p^v j\preceq_p r$, whence $k\not \in \calK^*$, yielding a contradiction. 
So $p^v j\not \in \mathcal{S}(\calK)$ for any $v\in \Z^+$, and we conclude that 
$j\in \calK^*$.
\end{proof}

In order to state the version of the large sieve inequality that we employ to derive a 
minor arc estimate, we must introduce some notation. Suppose that 
$\Gamma \subset \K_\infty$. We say that the elements of $\Gamma$ are 
{\it $q^\delta$-spaced in $\T$} if, for any distinct elements 
$\gamma_1,\gamma_2\in \Gamma$, we have 
$\ord \{ \gamma_1-\gamma_2\}\ge \delta$.

\begin{thm}\label{largesieve}
Let $K$ and $N$ be positive integers. Suppose that $\Gamma\subset \K_\infty$ is a 
$q^{-K}$-spaced set in $\T$. Consider a sequence $(b_x)_{x \in \F_q[t]}$ of complex 
numbers, and when $\beta \in \K_\infty$ define 
\[
\mathcal{S}(\beta)=\sum_{x \in \G_N}b_x\, e(x\beta).
\]
Then
\[
\sum_{\gamma \in \Gamma}|\mathcal{S}(\gamma)|^2\le 
\max\left\{ q^N,q^{K-1}\right\} \sum_{x\in \G_N} |b_x|^2.
\]
\end{thm}

\begin{proof} This is Hsu \cite[Theorem 2.4]{hsu}.
\end{proof}

In order to apply Theorem \ref{largesieve}, we employ a construction from 
\cite{lw-waring}. It is convenient in this setting to introduce some further notation.

\begin{defn}
Suppose that $k\in \mathbb{Z}^+$ and $g\in \F_q[t]\setminus\{0\}$. We say that a set 
of monic polynomials $\mathcal{L}\subset \F_q[t]$ is a \textit{$(k,g)$-set} if, for any 
$\ell_1,\ell_2\in \mathcal{L}$, one has $\ell^k_1\equiv \ell^k_2\mmod{g}$ if and only if 
$\ell_1\equiv \ell_2\mmod{g}$.
\end{defn}

The next lemma allows us to partition a given finite subset of $\F_q[t]$ into a small 
number of $(k,g)$-sets.

\begin{lem}\label{hensel}
Let $k$ be a positive integer satisfying $p\nmid k$. Also, let $g\in \F_q[t]$, and suppose 
that $A$ is a subset of $\F_q[t]$, all of whose elements are coprime to $g$. Then for 
each $\epsilon>0$, the set $A$ can be partitioned into $O_{k,q,\epsilon}(|g|^\epsilon)$ 
subsets, each of which is a $(k,g)$-set.
\end{lem}

\begin{proof} This is essentially \cite[equation (12.4)]{lw-waring}, though for completeness 
we include a proof. We begin with an estimate for the number of solutions of a certain 
polynomial congruence. Working under the hypotheses of the statement of the lemma, 
when $a\in \mathbb{F}_{q}[t]$, denote by $J(g,a)$ the number of solutions of the 
congruence $x^k\equiv a\mmod{g}$ with $\deg(x)<\deg(g)$ and $(x,g)=1$. Thus, 
necessarily, one has $(a,g)=1$. Then we claim that $J(g,a)\leq k^{\omega(g)}$, where 
$\omega(g)$ denotes the number of distinct monic irreducible factors of $g$. For each 
$a\in \Fq[t]$, we write $\{ x_1(a),\ldots ,x_J(a)\}$ for the set of solutions of the above 
congruence, where $J=J(g,a)$ and the elements $x_i(a)$ are distinct for $1\le i\le J$. Then 
$A$ can be partitioned into the sets
\[
A_i=\{ x\in A: \text{there exists $a\in \Fq[t]$ such that $J(g,a)\ge i$ and 
$x\equiv x_i(a)\mmod{g}$}\},
\]
for $1\le i\le k^{\omega(g)}$, each of which is a $(k,g)$-set. The conclusion of the lemma 
follows by means of the familiar estimate
\[
\omega(g) \leq \log_2 d(g) \ll_q \frac{\deg g}{\log \deg g},
\] 
where $d(g)$ denotes the number of divisors of $g$ (see for example 
\cite[Lemma 5]{Le2011}).\par

We now set about confirming the above claim. For each irreducible polynomial $\ell$ with 
$\ell\mid g$, the congruence $x^k\equiv a\mmod{\ell}$ has at most $k$ solutions. Thus, 
since $p\nmid k$, it follows from Hensel's lemma that for any $r\geq 2$, each solution of 
$x^k\equiv a\mmod{\ell}$ lifts uniquely to a corresponding solution modulo $\ell^r$. 
Factoring $g$ as a product of powers of irreducible polynomials in the form 
$\prod \ell_j^{r_j}$, and counting solutions modulo $\ell_j^{r_j}$ for each $j$, we deduce 
via the Chinese Remainder Theorem that there are at most $k^{\omega(g)}$ solutions 
modulo $g$. This completes the proof of the lemma.
\end{proof}

We next state a mean value theorem for a system of equations having indices defined by 
the elements of the set $\mathcal{S}(\calK)$ defined in \eqref{w1}. For $N\in \Z^+$, 
denote by $J_s(\mathcal{S}(\calK);N)$ the number of solutions of the system
\[
u_1^j+\cdots +u_s^j=v_1^j+\cdots +v_s^j\quad (j\in \mathcal{S}(\calK)),
\]
with $u_r, v_r\in \GN$ $(1\le r\le s)$. Since 
$(u_1+\cdots +u_s)^p=u_1^p+\cdots +u_s^p$, these equations are not always 
independent. To obtain independence, we consider the set
\begin{equation}\label{S(K)-prime}
\mathcal{S}(\calK)'=\left\{i \in \Z^+:\text{$p\nmid i$ and $p^v i\in \mathcal{S}(\calK)$ 
for some $v \in \Z^+\cup\{0\}$}\right\}.
\end{equation}
We note that when $j=p^v i$ with $p\nmid i$, we have 
$u_1^j+\cdots +u_s^j=(u_1^i+\cdots +u_s^i)^{p^v}$. It therefore follows that 
$J_s(\mathcal{S}(\calK);N)$ also counts the number of solutions of the system
\[
u_1^i+\cdots +u_s^i=v_1^i+\cdots +v_s^i\quad (i\in \mathcal{S}(\calK)'),
\]
with $u_r, v_r\in \GN$ $(1\le r\le s)$. We shall find it useful to define three quantities 
associated with this system of equations, namely
\begin{equation}\label{w2}
\psi(\calK)=\text{card}\,\mathcal{S}(\calK)',\quad 
\phi(\calK)=\max_{i \in  \mathcal{S}(\calK)'}\, i \quad \text{and}\quad 
\kappa(\calK)=\sum_{i \in \mathcal{S}(\calK)'}i.
\end{equation}
Where the intended meaning is unambiguous, we drop mention of $\calK$ from this 
notation without comment. The following result gives an upper bound on 
$J_s(\mathcal{S}(\calK);N)$. 

\begin{thm}\label{vmt}
Suppose that $s\ge \psi(\phi+1)$. Then for any $\epsilon>0$, there exists a constant 
$C_1=C_1(s;\calK;\epsilon;q)>0$ such that 
\[
J_s(\mathcal{S}(\calK);N)\le C_1(q^N)^{2s-\kappa +\epsilon }.
\]
\end{thm}

\begin{proof} Observe that whenever $j\in \mathcal{S}(\calK)$, and $i\in \Z^+$ satisfies  
$i\preceq_pj$, one has $i\in \mathcal{S}(\calK)$. Therefore, the set $\mathcal{S}(\calK)$ 
satisfies the inclusion relation defined in Condition$\star$ of \cite[Section 1]{klz}. The 
desired conclusion therefore follows as a special case of \cite[Theorem 1.1]{klz}. 
\end{proof}

We remark that a multidimensional generalization of Theorem \ref{vmt} can be found in 
\cite{klz}. Meanwhile, the condition $s\ge \psi(\phi+1)$ of this theorem can be refined, as is 
shown in \cite{lw}.\par

We now recall some facts about continued fractions in $\K_\infty$ needed in our proof of 
Theorem \ref{th:main3}. For any irrational element $\alpha$ lying in $\K_{\infty}$, we can 
write $\alpha$ as an infinite continued fraction in the form
\[
\alpha =b_0+\frac{1}{b_1+\frac{1}{b_2+\cdots}}=[b_0;b_1,b_2,\ldots],
\]
with $b_i\in \Fq[t]$ and $\ord b_i>0$ $(i\ge 1)$. When $\alpha$ is a rational element of 
$\K_\infty$, meanwhile, one may write $\alpha$ as a finite continued fraction of the form
\[
\alpha =b_0+\frac{1}{b_1+\frac{1}{b_2+\frac{1}{\cdots +\frac{1}{b_n}}}}=
[b_0;b_1,b_2,\ldots,b_n],
\]
with $b_i\in \Fq[t]$ and $\ord b_i>0$ $(1\le i\le n)$. We note that continued fraction 
expansions in $\K_{\infty}$ are uniquely defined. We define two sequences 
$(a_n)_{n\geq -2}$ and $(g_n)_{n\geq -2}$ in $\Fq[t]$ recursively by putting
\[
a_{-2}=0,\quad g_{-2}=1,\quad a_{-1}=1,\quad g_{-1}=0,
\]
and for all $n \ge 0$,
\[
a_n=b_n a_{n-1}+a_{n-2}\quad \textup{and} \quad g_n =b_n g_{n-1}+g_{n-2}.
\]
Then for all $n\geq 0$, we have
\[
g_na_{n-1}-a_{n}g_{n-1}=(-1)^n\qquad \textup{and}\qquad 
[b_0;b_1,\ldots ,b_n]=a_n/g_n.
\]
The fractions $a_n/g_n$ $(n \ge 0)$ are called the \textit{convergents} of $\alpha$. An 
inductive argument shows that the sequence $(\ord g_n)_{n\ge 0}$ is strictly increasing. 

\begin{prop}\label{prop:cont}
Suppose that $\alpha\in \K_\infty$. Then the convergents $a_n/g_n$ $(n \ge 0)$ of 
$\alpha$ satisfy the following properties.
\begin{enumerate}[topsep=-5pt,itemsep=-1ex,partopsep=1ex,parsep=1ex]
\item[(a)] One has $\ord (g_n\alpha -a_n)=-\ord g_{n+1}$ $(n \ge 0)$.
\item[(b)] If $a,g\in \Fq[t]$ satisfy $\ord (g\alpha -a)<-\ord g$, then $a/g$ is a convergent 
of $\alpha$.
\end{enumerate}
\end{prop}

\begin{proof} See \cite[Section 1]{schmidt}.
\end{proof}

The conclusion (b) of Proposition \ref{prop:cont} is sometimes referred to as Legendre's 
theorem. The following lemma concerns elements of $\K_\infty$ well-approximated by 
rationals.

\begin{lem}\label{lem:diophantine}
Let $\alpha \in \K_{\infty}$. Suppose that there exists a constant $\kappa >1$ such that, 
for all sufficiently large $N$, there exist $a\in \Fq[t]$ and $g\in \Fq[t]\setminus \{0\}$ with 
$\ord (g\alpha-a)\le -\kappa N$ 
and $\ord g<N$. Then $\alpha$ is rational.
\end{lem}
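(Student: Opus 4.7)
The plan is to argue by contraposition: assuming $\alpha$ is irrational, I will use Legendre's theorem (Proposition \ref{prop:cont}(2)) together with the strict growth of the continued fraction denominators to force $\kappa \leq 1$, contradicting the hypothesis $\kappa > 1$.

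First I would write out the continued fraction expansion of $\alpha$, which under the assumption of irrationality is infinite and produces a sequence of convergents $a_n/g_n$ whose denominators satisfy $\ord g_0 = 0$ and $\ord g_{n+1} > \ord g_n$ for every $n \geq 0$. I would then fix an index $m$ large enough that $N := \ord g_m$ exceeds the threshold from which the Diophantine hypothesis is available, and apply the hypothesis to obtain $a, g \in \Fq[t]$ with $\ord g < N$ and $\ord(g\alpha - a) \leq -\kappa N$. Since $\kappa > 1$ gives $\ord(g\alpha - a) \leq -\kappa N < -N \leq -\ord g$, Legendre's theorem applies, so $a/g$ is a convergent of $\alpha$. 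Using that $\gcd(a_n, g_n) = 1$ (which follows from $g_n a_{n-1} - a_n g_{n-1} = (-1)^n$), I can write $g = c g_n$ and $a = c a_n$ for some $c \in \Fq[t]$ with $\ord c \geq 0$ and some index $n$.

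Next I would extract two competing bounds on $\ord g_{n+1}$. On one hand, $\ord g_n = \ord g - \ord c \leq \ord g < N = \ord g_m$, and strict monotonicity of $(\ord g_j)_{j \geq 0}$ forces $n < m$, hence $\ord g_{n+1} \leq \ord g_m = N$. On the other hand, Proposition \ref{prop:cont}(1) gives
\[ \ord g_{n+1} = -\ord(g_n \alpha - a_n) = \ord c - \ord(g\alpha - a) \geq \kappa N. \]
Combining the two yields $\kappa N \leq N$, which contradicts $\kappa > 1$ as soon as $N \geq 1$ (always achievable by taking $m$ large enough). Hence $\alpha$ must be rational.

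The argument is essentially mechanical once set up; the only step requiring some care is the passage from the arbitrary pair $(a, g)$ provided by the hypothesis to a reduced form $a/g = a_n/g_n$ matching an honest convergent, and then tracking the common factor $c$ in both inequalities so that $\ord c \geq 0$ preserves them in the direction needed to close the contradiction.
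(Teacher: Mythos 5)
Your proof is correct and follows essentially the same route as the paper: take $N=\ord g_m$ for a large convergent denominator, apply the hypothesis, invoke Legendre's theorem to identify $a/g$ with a convergent, and use Proposition \ref{prop:cont}(1) together with the strict growth of $(\ord g_n)$ to reach a contradiction. If anything, your handling of the possible common factor $c$ (the paper simply writes $a=a_m$, $g=g_m$) is slightly more careful than the published argument.
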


\begin{proof}
Suppose that $\alpha$ is irrational and $a_n/g_n$ $(n \ge 0)$ are the convergents of 
$\alpha$. Since $\alpha$ is irrational, we have $\lim_{n \rightarrow \infty}\ord g_n=\infty$. 
We take $n$ sufficiently large and put $N=\ord{g_n}$. By hypothesis, there exist 
$a\in \Fq[t]$ and $g\in \Fq[t]\setminus\{0\}$ such that $\ord g<N$ and
\begin{equation}\label{w3}
\ord (g\alpha -a)\le -\kappa N<-\ord g_n=-N<-\ord g.
\end{equation}
It therefore follows from Proposition \ref{prop:cont}(b) 
that $a/g$ is a convergent of $\alpha$. But 
$\ord g<N=\ord g_n$ and the sequence $(\ord g_n)_{n\ge 0}$ is strictly increasing, so 
there exists $m\in \Z^+\cup\{0\}$ with $m<n$ such that $a=a_m$ and $g=g_m$. 
However, we find from Proposition \ref{prop:cont}(a) that 
\[
\ord (g\alpha -a)=\ord(g_m\alpha -a_m)=-\ord (g_{m+1})\ge -\ord g_n,
\]
and this contradicts \eqref{w3}. We thus conclude that $\alpha$ is rational. 
\end{proof}

We end this section by recalling Weyl's criterion for equidistribution  in $\Fq[t]$.

\begin{thm}\label{th:weyl}
The sequence $(a_x)_{x \in \Fq[t]}\subset \K_\infty$ is equidistributed in $\T$ if and only if 
for any $m\in \Fq[t]\setminus \{0\}$, we have
\[
\lim_{N \rightarrow \infty}\frac{1}{q^N}\Biggl| \sum_{x\in \GN}e(ma_x)\Biggr|=0.
\]
\end{thm}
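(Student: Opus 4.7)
The plan is to exploit Pontryagin duality between the compact group $\T$ and its discrete dual $\Fq[t]$: every continuous character of $\T$ has the form $\alpha \mapsto e(m\alpha)$ for some $m \in \Fq[t]$. The proof then reduces to elementary finite Fourier analysis, since in the function field setting cylinder sets and characters are tightly linked --- much more so than in the archimedean case.

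First, I would observe that for each $m \in \Fq[t]$ the function $\alpha \mapsto e(m\alpha)$ is constant on every cylinder set of depth $k$ whenever $k > \deg m$: if $m=\sum_{j=0}^{\deg m} m_j t^j$ and $\alpha=\sum_{i\le -1}a_i t^i$, then $\res(m\alpha)=\sum_{j} m_j a_{-1-j}$, which depends only on $a_{-1},\dots,a_{-1-\deg m}$. So such characters descend to characters of the finite abelian quotient of $\T$ obtained by truncating at depth $k$, a group of order $q^k$ whose $q^k$ characters are precisely $\{e(m\cdot) : \deg m < k\}$. By orthogonality on this finite group, the indicator function of any cylinder set $\mathcal{C}$ of depth $k$ admits a finite Fourier expansion
\[ 1_{\mathcal{C}} = \sum_{\deg m < k} c_m^{\mathcal{C}}\,e(m\cdot), \]
whose constant term equals $c_0^{\mathcal{C}}=\lambda(\mathcal{C})=q^{-k}$.

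Given this expansion, both implications are immediate. For $(\Rightarrow)$, fix $m \neq 0$, set $k = \deg m + 1$, and conversely write $e(m\cdot) = \sum_{\mathcal{C}} v_{\mathcal{C}}\,1_{\mathcal{C}}$ as a finite sum over cylinder sets of depth $k$; applying the equidistribution hypothesis termwise gives $q^{-N}\sum_{x\in\GN} e(m a_x) \to \sum_{\mathcal{C}} v_{\mathcal{C}} \lambda(\mathcal{C}) = \int_\T e(m\alpha)\,d\lambda(\alpha) = 0$, the last equality being character orthogonality for the nontrivial character $e(m\cdot)$. For $(\Leftarrow)$, fix a cylinder set $\mathcal{C}$ of depth $k$ and average the Fourier expansion of $1_{\mathcal{C}}$ over $x \in \GN$ to get
\[ \frac{\#\{x\in\GN : \{a_x\}\in\mathcal{C}\}}{q^N} = q^{-k} + \sum_{0\ne m,\,\deg m<k} c_m^{\mathcal{C}}\cdot\frac{1}{q^N}\sum_{x\in\GN} e(m a_x); \]
the finitely many nontrivial summands all vanish as $N\to\infty$ by hypothesis, yielding $\lambda(\mathcal{C}) = q^{-k}$ in the limit. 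The only step requiring care is the Fourier expansion of $1_{\mathcal{C}}$, which is a purely algebraic matter of pairing characters on a finite abelian group; unlike in the archimedean setting one need not approximate by continuous functions, so no genuine analytic obstacle arises --- which is precisely why this result is typically called a criterion rather than a theorem.
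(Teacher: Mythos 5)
Your argument is correct, and it is complete as written. Note that the paper itself offers no proof of this statement---it is quoted from Carlitz---so there is nothing internal to compare against; your reduction to finite Fourier analysis on the quotient groups $\T/\{\alpha: a_{-1}=\cdots=a_{-k}=0\}$, whose characters are exactly $e(m\cdot)$ with $\ord m < k$, is the standard and essentially the only natural route, and it is exact here precisely because cylinder sets are cosets (no approximation by continuous functions is needed). The one step you use implicitly is the identity $e(ma_x)=e(m\{a_x\})$, which holds since $m(a_x-\{a_x\})\in\Fq[t]$ has zero residue; it is worth a line but poses no difficulty.
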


\begin{proof} This is Carlitz \cite[Theorem 4]{carlitz}.
\end{proof}

\section{A Weyl-type estimate}\label{sec:weyl}
Our goal in this section is the proof of an estimate of minor arc type for a certain 
exponential sum. In advance of the statement of this estimate, we recall the definition 
\eqref{eq:k-star} of the set $\calK^*$.  

\begin{thm} \label{th:main1}
Fix $q$ and a finite set $\calK \subset \Z^+$. There exist positive constants $c$ and $C$, 
depending only on $\calK$ and $q$, such that the following holds. Let $\epsilon>0$ and let 
$N$ be sufficiently large in terms of $\calK$, $\epsilon$ and $q$. Suppose that 
$f(u)=\sum_{r\in \calK\cup\{0\}}\alpha_r u^{r}$ is a polynomial with coefficients in 
$\K_\infty$ satisfying the bound
\[
\biggl|\sum_{x \in \GN}e(f(x))\biggr| \geq q^{N -\eta },
\]
for some positive number $\eta$ with $\eta \le cN$. Then, for each maximal $k\in \calK^*$, 
there exist $a\in \Fq[t]$ and monic $g\in \Fq[t]$ having the property that
\[
\ord (g\alpha_k-a)<-kN+\epsilon N+C\eta \quad \textrm{and}\quad \ord g \leq \epsilon 
N+C\eta.
\] 
\end{thm}

We remark that an $\epsilon$-free version of this conclusion could be derived by making 
use of major arc approximations to the exponential sum under consideration. We direct the 
interested reader to \cite[Lemma 2.1]{Woo2003} for a model of the kind of argument that 
would be required to achieve such a conclusion. Observe also that in Theorem 
\ref{th:main1}, the coefficient $\alpha_k$ plays the role of the leading coefficient of the 
polynomial, and might be regarded as the ``true'' $\Fq[t]$-analog of the leading coefficient. 
Furthermore, clearly, if $k$ is the greatest element in $\calK$, then $k$ is maximal in 
$\calK$. However, a set may have more than one maximal element. For example, if $p=2$ 
and $\calK = \{1,3,5,9\}$ then $9$, $5$, and $3$ are all maximal elements of $\calK$ 
and they all satisfy the hypothesis of Theorem \ref{th:main1}.\par

We require two auxiliary lemmas in our proof of Theorem \ref{th:main1}. First, we recall 
a familiar lemma employing Weyl shifts of a form suitable for our subsequent deliberations.  

\begin{lem}\label{weylshift} 
Let $\mathcal{A}$ be a multiset of elements from $\GN$, and write $|\mathcal{A}|$ for 
$\text{card}(\mathcal{A})$. Then we have
\[
\usum_{x\in \GN}e(f(x))= |\mathcal{A}|^{-1}\usum_{x\in \GN}\usum_{y \in \mathcal{A}}
e(f(y-x)).
\]
\end{lem}

\begin{proof} For $y \in \GN$, it follows via a change of variable that
\[
\sum_{x\in \GN}e(f(x))=\sum_{x \in \GN}e(f(y-x)).
\]
Thus, it follows that 
\[
|\mathcal{A}|\sum_{x\in \GN}e(f(x))=\usum_{y \in \mathcal{A}}
\sum_{x \in \GN}e(f(y-x))=\sum_{x \in \GN}\usum_{y\in \mathcal{A}}e(f(y-x)),
\]
and the desired conclusion is immediate.
\end{proof}

Consider a finite subset $\calK$ of $\Z^{+}$ and its shadow $\mathcal{S}(\calK)$. Let 
$f(u)=\sum_{r\in \calK\cup \{0\}}\alpha_r u^{r}$ be a polynomial with coefficients in 
$\K_\infty$, and write ${\boldsymbol \alpha}$ for $\{ \alpha_r\}_{r\in \calK}$. For any 
$r\in \calK$, we have
\[
(y-x)^r=\usum_{j\preceq_p r}\binom{r}{j} y^j(-x)^{r-j}+(-x)^r.
\]
Therefore, if $k$ is maximal in $\calK$, then for a fixed $x\in \GN$ there exist
\[
\gamma_0=\gamma_0(\alpha_0,{\boldsymbol \alpha};x)\in \mathbb{K}_\infty\qquad 
\text{and}\qquad \gamma_j=\gamma_j({\boldsymbol \alpha};x)\in \mathbb{K}_\infty
\quad (j\in \mathcal{S}(\calK)\setminus \{k\})
\]
such that 
\begin{equation}\label{f-shift}
f(y-x)=\alpha_k (y-x)^k+\usum_{r\in \calK\setminus \{k\}}\alpha_r (y-x)^r+\alpha_0
=\alpha_k y^k+\usum_{j\in \mathcal{S}(\calK)\setminus \{k\}}\gamma_j y^j+\gamma_0.
\end{equation}
The next lemma provides a conclusion occurring within the argument of the proof of 
\cite[Lemma 12.1]{lw-waring}.

\begin{lem}\label{spacing}
Let $M \in \Z^+$ with $M \le N$, and let $k \in \Z^+$ with $p\nmid k$ and 
$\alpha _k\in \mathbb{K}_\infty$. Suppose that $a, g\in \F_q[t]$ with $(a,g)=1$ and 
$\ord(g\alpha_k-a)<-kM$, and suppose further that either $\ord(g\alpha_k-a)\ge M-kN$ or 
$\ord g>M$. Finally, let $\mathcal{L}_0$ be a $(k, g)$-subset of monic polynomials of 
degree $M$. Then the points $\{\alpha_k l^k:l\in \mathcal{L}_0\}$ are spaced at least 
$\min\{|g|^{-1},q^{k(M-N)}\}$ apart in $\mathbb{T}$.  
\end{lem}

\begin{proof} Suppose that $l_1,l_2\in \mathcal{L}_0$ with $l_1\not \equiv l_2\mmod{g}$. 
Then, since $\mathcal{L}_0$ is a $(k,g)$-subset, we have 
$l_1^k\not \equiv l_2^k\mmod{g}$. Write $\alpha_k=a/g+\beta$. Then
\[
\ord \{ \alpha_k(l_1^k-l_2^k)\} =\ord \{ a(l_1^k-l_2^k)/g+\beta (l_1^k-l_2^k)\}.
\]
Since $\ord (g\beta)<-kM$ and $\ord l_1=\ord l_2=M$, we have
\[
\ord \{ \beta(l_1^k-l_2^k)\}<-kM-\ord g+kM=-\ord g.
\]
Also, since $l_1^k\not \equiv l_2^k\mmod{g}$ and $(a,g)=1$, we have 
\[
\ord \{a(l_1^k-l_2^k)/g\} \ge -\ord g.
\]
We therefore deduce that 
\begin{equation}\label{spacing1}
\ord \{ \alpha_k(l_1^k-l_2^k)\}=\ord \{ a(l_1^k-l_2^k)/g\}\ge -\ord g.
\end{equation}

\par We now divide into cases, according to the size of $\ord g$. 

\noindent \textbf{Case 1.} Suppose first that $\ord g>M$. In this case, the elements of 
$\mathcal{L}_0$ are distinct $\mmod{g}$. Consequently, by \eqref{spacing1}, the points 
$\alpha_k l^k$ are spaced at least $|g|^{-1}$ apart in $\mathbb{T}$.

\noindent \textbf{Case 2.} If instead $\ord g\le M$, then the hypotheses of the lemma 
ensure that one has $\ord (g\alpha_k-a)\ge M-kN$. When $l_1,l_2\in \mathcal{L}_0$ satisfy 
the condition $l_1\not \equiv l_2\mmod{g}$, then it follows from \eqref{spacing1} that 
$\alpha l_1^k$ and $\alpha l_2^k$ are spaced at least $|g|^{-1}$ apart in $\mathbb{T}$. 
Otherwise, when $l_1\equiv l_2\mmod{g}$, the bounds $\ord (g\alpha_k-a)<-kM$ and 
$\ord (g\alpha_k-a)\ge M-kN$ lead to the relation 
\begin{align}
\ord \{\alpha_k(l_1^k-l_2^k)\}&=\ord \{(\alpha_k-a/g)(l_1^k-l_2^k)\}\notag \\
&=\ord \big((\alpha_k-a/g)(l_1^k-l_2^k)\big)\notag \\
&\ge M-kN-\ord g+\ord (l_1^k - l_2^k).\label{spacing2}
\end{align}
We note that
\[
\ord (l_1^k-l_2^k)=\ord (l_1-l_2)+\ord (l_1^{k-1}+l_1^{k-2}l_2+\cdots +l_2^{k-1}).
\]
If $l_1\neq l_2$ and $l_1\equiv l_2\mmod{g}$, we have $\ord (l_1-l_2)\ge \ord g$. 
Furthermore, since the elements of $\mathcal{L}_0$ are monic 
and of degree $M$, the term $l_1^{k-1}+l_1^{k-2}l_2+\cdots +l_2^{k-2}$ is of degree 
$(k-1)M$ with leading coefficient $k$. Since $p\nmid k$, we have
\[
\ord (l_1^{k-1}+l_1^{k-2}l_2+\cdots +l_2^{k-1})=(k-1)M.
\]
On combining the above two estimates, we obtain the lower bound 
\[
\ord (l_1^k-l_2^k)\ge \ord g+(k-1)M,
\]
and hence we infer from (\ref{spacing2}) that
\[
\ord \{\alpha_k(l_1^k-l_2^k)\} \ge k(M-N).
\]
In this case, therefore, we find that $\alpha l_1^k$ and $\alpha l_2^k$ are spaced at least 
$q^{k(M-N)}$ apart in $\mathbb{T}$.\par

Combining the bounds obtained in the two respective cases, we conclude that for any 
distinct elements $l_1,l_2\in\mathcal{L}_0$, the points $\alpha_k l_1^k$ and 
$\alpha_k l_2^k$ are spaced at least $\min\{|g|^{-1}, q^{k(M-N)}\}$ apart in $\T$. This 
completes the proof of the lemma.
\end{proof}

We are now ready to prove Theorem \ref{th:main1}.

\begin{proof}[Proof of Theorem \ref{th:main1}]
We first note that should Theorem \ref{th:main1} hold for the polynomial 
$f(u)-\alpha_0=\sum_{r\in \calK}\alpha_ru^r$, then it holds also for $f(u)$. There is 
consequently no loss of generality in assuming that $\alpha_0=0$. Next, let $k$ be a 
maximal element of $\calK$ satisfying $p\nmid k$ and $p^vk\not \in \mathcal{S}(\calK)$ 
for any $v\in \Z^+$. Let $\alpha_k\in \K_\infty$ and consider $M \in \Z^+$ with $2M\le N$. 
By Dirichlet's approximation theorem in $\F_q[t]$ (see \cite[Lemma 3]{Ku}), there exist 
$a\in \F_q[t]$ and monic $g\in \F_q[t]$ with
\[
(a,g)=1,\quad \ord(g\alpha_k-a) <-kM\quad \text{and}\quad \ord g \le kM.
\]
Suppose that either
\begin{equation}\label{w9}
\ord(g\alpha_k-a)\ge M-kN\quad \text{or}\quad \ord g>M.
\end{equation}
We will show that, for $M$ suitably chosen, such an assumption leads to an upper bound 
for $\big|\sum_{x \in \GN}e(f(x))\big|$, which contradicts the lower bound asserted in the 
statement of the theorem.\par

Let $\mathcal{L}$ be the set of monic irreducible polynomials $l$ satisfying $\ord l=M$ and 
$(l,g)=1$. Since $\ord g\le kM$, the polynomial $g$ has at most $k$ irreducible factors of 
degree $M$. It therefore follows from the prime number theorem in $\F_q[t]$ that when 
$M$ is sufficiently large in terms of $k$ (and thus also $\calK$) and $q$, we have 
\[
q^M/(2M)\le \text{card}(\mathcal{L})\le q^M/M.
\]
Let $\calA$ be the multiset
\begin{equation}\label{w4}
\mathcal{A}=\big\{y\in \G_N:\text{$y=lw$ with $l\in \mathcal{L}$ and 
$w\in \G_{N-M}$}\big\},
\end{equation}
where the multiplicity of each element $y$ of $\mathcal{A}$ is equal to the number of its 
representations $y=lw$. Then
\[
|\mathcal{A}|=\text{card}(\mathcal{A})\geq q^{N-M}\cdot q^M/(2M)=q^N/(2M).
\] 
By Lemma \ref{weylshift} and (\ref{f-shift}), we therefore find that 
\begin{align*}
\biggl| \usum_{x\in \GN}e(f(x))\biggr| &\le 2Mq^{-N}\biggl| \usum_{x \in \GN}
\usum_{y\in \mathcal{A}}e\Big(\alpha_k y^k+
\usum_{j\in \mathcal{S}(\calK)\setminus \{k\}}\gamma_j({\boldsymbol \alpha};x)y^j\Big) 
\biggr| \\
&\le 2M\max_{x \in \GN}\biggl|\usum_{y \in  \mathcal{A}} e\Big(\alpha_k y^k+
\usum_{j \in \mathcal{S}(\calK)\setminus \{k\}}\gamma_j({\boldsymbol \alpha};x)
y^j\Big)\biggr|. 
\end{align*}
For $j\in \calS (\calK)\setminus \{k\}$, fix $\gamma_j=\gamma_j({\boldsymbol \alpha};x)$ 
to be the element of $\K_\infty$ corresponding to the choice of $x$ which maximizes the 
expression on the right hand side here.\par

Recall the definitions \eqref{w2} of $\psi$ and $\phi$, and let $s$ be a positive integer with 
$s\geq \psi\phi+\psi$. Then in view of \eqref{w4}, an application of H\"older's inequality 
delivers the bound
\[
\biggl|\usum_{x\in \GN}e(f(x))\biggr|^{2s}\le (2M)^{2s}(q^M/M)^{2s-1}
\usum_{l\in \mathcal{L}}\,\biggl| \sum_{w\in  \G_{N-M}}e\Big(\alpha_k (lw)^k+
\usum_{j\in \mathcal{S}(\calK)\setminus \{k\}}\gamma_j (lw)^j\Big)\biggr|^{2s}.
\]
Let $\epsilon>0$ be arbitrary. By Lemma \ref{hensel}, there exists a constant 
$C_1=C_1(k,q,\epsilon)>0$ such that the set $\mathcal{L}$ can be divided into 
$L\leq C_1|g|^\epsilon$ subsets $\mathcal{L}_1,\ldots ,\mathcal{L}_L$, having the 
property that $\mathcal{L}_i$ is a $(k,g)$-set for $1\le i\le L$. Then there exists 
$r\in \Z^+$ with $r \le L$ for which 
\begin{equation}\label{w5}
\biggl| \usum_{x\in \GN}e(f(x))\biggr|^{2s}\le 2^{2s}M(q^M)^{2s-1}C_1|g|^\epsilon 
\Psi,
\end{equation}
where
\begin{equation}\label{w6}
\Psi=\usum_{l\in \mathcal{L}_r}\biggl| \usum_{w\in \G_{N-M}}e\Big(\alpha_k (lw)^k+
\usum_{j\in \mathcal{S}(\calK)\setminus \{k\}}\gamma_j(lw)^j\Big) \biggr|^{2s}.
\end{equation}

Let $\mathcal{S}(\calK)'$ be the relation of the shadow set defined in \eqref{S(K)-prime}.  
For ${\bf h}=(h_i)_{i\in \mathcal{S}(\calK)'}$ with $h_i\in \F_q[t]$, let $b(\bf{h})$ denote 
the number of solutions of the system 
\[
w_1^i+\cdots +w_s^i=h_i\quad (i \in \mathcal{S}(\calK)'),
\]
with $w_r\in \G_{N-M}$ $(1\le r\le s)$. For $i\in \mathcal{S}(\calK)'$, we have 
$h_i\in \G_{i(N-M)}$. Furthermore, for $j= p^v i\in \mathcal{S}(\calK)$, with 
$i\in \mathcal{S}(\calK)'$ and $v\in \Z^+$, we have $w_1^j+\cdots +w_s^j=h_i^{p^v}$. 
Therefore, by defining $h_j=h_i^{p^v}$, we see that $b(\bf{h})$ also counts the number 
of solutions of the system
\begin{equation}\label{w7}
w_1^j+\cdots +w_s^j=h_j\quad (j \in \mathcal{S}(\calK)),
\end{equation}
with $w_r\in \G_{N-M}$ $(1\le r\le s)$. We remark here that since $p\nmid k$, we have 
$k\in \mathcal{S}(\calK)'$. Moreover, since $p^v k\not \in \mathcal{S}(\calK)$ for any 
$v\in \Z^+$, the equation of degree $k$ in \eqref{w7} is independent of the remaining 
equations of degree $j\in \mathcal{S}(\calK)\setminus\{k\}$. Therefore, we deduce from 
\eqref{w6} that
\[
\Psi =\usum_{l\in \mathcal{L}_r}\biggl| \usum_{\substack{h_i \in \G_{i(N-M)}\\ 
i\in \mathcal{S}(\calK)'}}b({\bf h})e\Big(\alpha_kh_k l^k+
\usum_{j\in \mathcal{S}(\calK)\setminus \{k\}}\gamma_j h_jl^j\Big)\biggr|^2.
\]
On recalling the definition \eqref{w2} of $\kappa(\mathcal K)$, we have
\[
\sum_{i\in \mathcal S(\mathcal K)'\setminus \{k\}}i=\kappa(\mathcal K)-k.
\]
Thus, we may conclude via Cauchy's inequality that
\begin{equation}\label{w8}
\Psi\le (q^{N-M})^{\kappa(\mathcal K)-k}\usum_{\substack{h_i \in \G_{i(N-M)}\\ 
i\in \mathcal{S}(\calK)'\setminus\{k\}}}\usum_{l \in \mathcal{L}_r}
\biggl| \usum_{h_k\in \G_{k(N-M)}} b({\bf h})e(\alpha_kh_k l^k)\biggr|^2.
\end{equation}

\par Since $p\nmid k$, it follows from Theorem \ref{largesieve} and Lemma \ref{spacing} 
that
\[
\usum_{l \in \mathcal{L}_r}\biggl| \usum_{h_k\in \G_{k(N-M)}}b({\bf h})
e(\alpha_kh_k l^k)\biggr|^2\le \big( |g|+q^{k(N-M)}\big) \usum_{h_k\in\G_{k(N-M)}}
|b({\bf h})|^2.
\]
Furthermore, by considering the underlying equations and recalling our assumption that 
$s\ge \psi \phi +\psi$, it follows from Theorem \ref{vmt} that there exists a constant 
$C_2=C_2(s;\calK;\epsilon;q)>0$ having the property that
\[
\usum_{\substack{h_i \in \G_{i(N-M)}\\ i\in \mathcal{S}(\calK)'\setminus\{k\}}}
\usum_{h_k\in \G_{k(N-M)}}|b({\bf h})|^2\le J_s(\mathcal{S}(\calK);N-M)
\le C_2(q^{N-M})^{2s-\kappa (\mathcal K)+\epsilon}.
\]
Since $\ord g\le kM$ and $2M\le N$, we may combine these estimates within \eqref{w8} to 
obtain the bound
\begin{align*}
\Psi &\le C_2(q^{N-M})^{2s-k+\epsilon}\big( |g|+q^{k(N-M)}\big) \\
&\le 2C_2(q^{N-M})^{2s+\epsilon}.
\end{align*}
We substitute this bound into \eqref{w5}, again noting that $\ord g\le kM$, to obtain the 
estimate
\[
\biggl| \usum_{x\in \GN}e(f(x))\biggr|\le 2q^N \big( 2C_1C_2M(q^M)^{-1}
(q^{kM})^\epsilon \big(q^{N-M})^\epsilon \big)^{1/(2s)}.
\]
Therefore, there exists a constant $C_3=C_3(s;\calK;\epsilon;q)>0$ such that for values of 
$M$ sufficiently large in terms of $\calK$, $\epsilon$ and $q$, one has
\[
\biggl| \usum_{x\in \GN}e(f(x))\biggr| \le q^N \big( C_3(q^M)^{-1}(q^N)^{k\epsilon}
\big)^{1/(2s)}.
\]

\par We now make the specific choice 
\begin{equation}\label{w10}
M=\lfloor \log_q C_3+kN\epsilon +2s\eta+1 \rfloor .
\end{equation}
Then it follows that 
\[
\biggl| \usum_{x \in \GN}e(f(x))\biggr| <q^{N-\eta },
\]
which contradicts the lower bound assumed in the statement of Theorem \ref{th:main1}. In 
view of the assumed bounds \eqref{w9}, this contradiction forces us to conclude that there 
exist $a\in \F_q[t]$ and monic $g\in \F_q[t]$ such that 
\[
\ord(g\alpha_k-a)<-kN+M \quad \text{and} \quad \ord g \le M.
\] 
Take $s=\psi\phi + \psi$, and then put $c=1/(8s)$ and $C=2s$. By assuming that 
$\epsilon <1/(4(k+1))$, we see that the requirement $2M\leq N$ is satisfied when 
$0<\eta \leq cN$, provided that $N$ is sufficiently large in terms of $\calK$, $\epsilon$ and 
$q$. We note that $c$ and $C$ are then constants depending only on $\calK$ and $q$. 
Moreover, when $N$ is sufficiently large, it follows from \eqref{w10} that 
\[
M\leq N(k+1)\epsilon +2s\eta\le N(k+1)\epsilon +C\eta.
\] 
Since $\epsilon>0$ is arbitrary, the conclusion of Theorem \ref{th:main1} follows. 
\end{proof}

\section{Extending the Weyl-type estimate to other coefficients} \label{sec:weyl2}
In this section, we extend Theorem \ref{th:main1} to indices which are not maximal. In 
preparation for the statement of this conclusion, we recall the definition \eqref{eq:k-star} 
of $\calK^*$.

\begin{thm}\label{th:main2}
Fix $q$ and a finite set $\calK \subset \Z^+$, and consider an integer $k\in \calK^*$. 
There exist positive constants $c_k$ and $C_k$, depending only on $k$, $\calK$ and $q$, 
such that the following holds. Let $\epsilon>0$ and let $N$ be sufficiently large in terms of 
$\calK$, $\epsilon$ and $q$. Suppose that $f(u)=\sum_{r\in \calK\cup\{0\}}\alpha_r u^{r}$ 
is a polynomial with coefficients in $\K_\infty$ satisfying the bound
\[
\biggl| \sum_{x\in \GN}e(f(x))\biggr| \geq q^{N-\eta},
\]
for some positive number $\eta$ with $\eta \le c_kN$. Then, there exist $a_k\in \Fq[t]$ and 
monic $g_k\in \Fq[t]$ such that  
\[
\ord (g_k\alpha_k-a_k)<-kN+\epsilon N+C_k\eta \quad \textrm{and}\quad 
\ord g_k \leq \epsilon N+C_k\eta.
\]
\end{thm}

\begin{proof}
Without loss of generality, we can assume that $\alpha_0=0$. We prove this theorem by 
downward induction on $k\in \calK^*$ with respect to the partial order $\preceq_{p}$. If 
$k$ is maximal in $\calK$, then the conclusion is immediate from Theorem \ref{th:main1}. 
Suppose that the conclusion of the theorem has been established for any $h\in \calK^*$ 
with $k\preceq_p h$ and $h\neq k$. Define
\begin{equation}\label{k0k1}
\calH_0=\{ h\in \calK : \text{$k\preceq_p h$ and $h\ne k$} \}\quad \text{and}\quad 
\calH_1=\calK\setminus \calH_0.
\end{equation}
Then it follows from Lemma \ref{lem:shadow}(c) that $\calH_0\subset \calK^*$. For 
$h\in \calH_0$, let $c_h$ and $C_h$ be the positive constants whose existence is assured 
by the inductive hypothesis, as a consequence of the conclusion of Theorem \ref{th:main2}. 
Let 
\[
c=\min\big\{c_h: h\in\calH_0\big\}\quad \textup{and}\qquad C=\sum_{h\in \calH_0}C_h.
\] 
Suppose that for some positive number $\eta$ with $\eta \leq cN$, one has 
\begin{equation}\label{upperbound}
\biggl| \sum_{x\in \GN}e(f(x))\biggr| \geq q^{N-\eta}.
\end{equation}
Let $\epsilon>0$ be arbitrary, and let $N$ be sufficiently large in terms of $\calK$, 
$\epsilon $ and $q$. Then, by the inductive hypothesis, for any $h \in \calH_0$ there exist 
$a_h\in \Fq[t]$ and monic $g_h\in \Fq[t]$ such that 
\[
\ord(g_h\alpha_h-a_h)<-hN+|\calH_0|^{-1}\epsilon N+C_h\eta \quad \text{and} \quad 
\ord g_h\le |\calH_0|^{-1}\epsilon N+C_h\eta  .
\]
Define 
\[
g=\prod_{h \in \calH_0}g_h\quad \textup{and}\quad b_h=a_h\prod_{j\in \calH_0
\setminus \{ h\} }g_j.
\] 
Then $g$ is monic and we have
\begin{equation}\label{w11}
\ord(g\alpha_h-b_h)<-hN+\epsilon N+C\eta \quad \text{and}\quad \ord g\le 
\epsilon N+C\eta .
\end{equation}

\par Consider a positive integer $M$ with $M<N-\ord g$. We rewrite the set $\GN$ first as 
a union of arithmetic progressions modulo $g$, and then subdivide these arithmetic 
progressions into subprogressions of appropriately small length. Thus we obtain
\begin{align*}
\GN&=\big\{ gv + w:\text{$v\in \G_{N-\ord g}$ and $w\in \G_{\ord g}$}\big\} \\
&=\big\{ g(t^Mz+y)+w:\text{$z\in \G_{N-M-\ord g}$, $y\in \G_M$ and 
$w\in \G_{\ord g}$}\big\} . 
\end{align*}
For each $z\in \G_{N-M-\ord g}$ and $w\in \G_{\ord g}$, write $s=gt^Mz+w$. Then 
$\ord s<N$ and we see that the set $\GN$ can be partitioned into $q^{N-M}$ blocks of the 
form
\[
\mathcal{B}_s=\big\{ gy+s: y\in \G_M\big\}.
\]
Then it follows from the lower bound (\ref{upperbound}) that there exists a block 
$\mathcal{B}_s$ such that
\begin{equation}\label{th2-1}
\biggl| \sum_{x \in \mathcal{B}_s}e(f(x))\biggr| =\biggl| \sum_{y\in \GM}e(f(gy+s))\biggr| 
\geq q^{N-\eta}\big(q^{N-M}\big)^{-1}=q^{M-\eta }.
\end{equation}
By reference to \eqref{k0k1}, we see that
\[
\biggl|\sum_{y \in \GM}e(f(gy+s))\biggr| =\biggl| \sum_{y\in \GM}e\biggl( 
\sum_{h \in \calH_0}\alpha_h (gy+s)^h +\sum_{h\in \calH_1}\alpha_h (gy+s)^h\biggr) 
\biggr| .
\]
Write $\beta_h=\alpha_h-b_h/g$ $(h\in \calH_0)$. Also, note that
\[
e\biggl( \sum_{h\in \calH_0}\alpha_hs^h\biggr)
\]
is a constant independent of $y$, and
\[
e\biggl( \sum_{h\in \calH_0}\frac{b_h}{g}\left( (gy+s)^h-s^h\right) \biggr)=1.
\]
Then we see that
\begin{equation}\label{th2-2}
\biggl|\sum_{y \in \GM}e(f(gy+s))\biggr| =\biggl| \sum_{y\in \GM}e\biggl( 
\sum_{h\in \calH_0}\beta_h\big( (gy+s)^h-s^h\big)+\sum_{h\in \calH_1}\alpha_h 
(gy+s)^h\biggr) \biggr| . 
\end{equation}

\par For any $y \in \GM$ and $h \in \calH_0$, we have 
\begin{align*}
\ord \big( (gy+s)^h-s^h\big) &\leq \ord (gy)+ (h-1)\cdot \max \big\{ \ord (gy),\ord s\big\} 
\\
&<\ord g+M+(h-1)N.
\end{align*}
It therefore follows from \eqref{w11} that 
\begin{align*} 
\ord \big( \beta_h\big( (gy+s)^h-s^h\big) \big)&<(-hN+\epsilon N+C\eta -\ord g)+
(\ord g+M+(h-1)N)\\
&=\epsilon N+C\eta +M-N.
\end{align*}
We now make the specific choice 
\[
M=\lfloor (1-\epsilon )N-C\eta-1\rfloor .
\]
Then it follows that 
\[
\epsilon N+C\eta +M-N\le -1,
\] 
and hence 
\[
\ord \big( \beta_h \big( (gy+s)^h-s^h\big) \big)< -1.
\]
Therefore, we have 
\begin{equation}\label{th2-3}
e\biggl( \sum_{h\in \calH_0} \beta_h\big( (gy+s)^h-s^h\big) +\sum_{h\in \calH_1}
\alpha_h(gy+s)^h\biggr) =e\biggl( \sum_{h\in \calH_1}\alpha_h (gy+s)^h\biggr).
\end{equation}
Combining (\ref{th2-1}), (\ref{th2-2}) and (\ref{th2-3}), we obtain the lower bound
\begin{equation}\label{w13}
\biggl| \sum_{y\in \GM}e\biggl( \sum_{h\in \calH_1}\alpha_h (gy+s)^h\biggr)\biggr|\geq 
q^{M-\eta}.
\end{equation}
We note here that from \eqref{w11} we have $\ord g\le \epsilon N+C\eta$, and thus for 
$N$ sufficiently large, the above choice of $M$ satisfies $0<M<N-\ord g$.\par

In view of the definition \eqref{w1}, we have
\begin{equation}\label{w12}
\sum_{h\in \calH_1}\alpha_h (gy+s)^h=\sum_{j\in \calS (\calH_1)\cup\{ 0\}}\gamma_jy^j,
\end{equation}
for suitable coefficients $\gamma_j=\gamma_j({\boldsymbol \alpha},g,s)\in \K_\infty$. 
Since $k\in \calK^*$ is maximal in $\calH_1$, it follows from Lemma \ref{lem:shadow} that 
$k$ is maximal in $\mathcal{S}(\calH_1)$ and $k\in \mathcal{S}(\calH_1)^*$. Furthermore, 
the coefficient of $y^k$ in the polynomial on the left hand side of \eqref{w12} is 
$\alpha_k g^k$. Note also that we may suppose the parameter $M$ to be sufficiently large 
in terms of $\calK$, $\epsilon$ and $q$. Thus, by Theorem \ref{th:main1}, there exist 
positive constants $d_k$ and $D_k$ having the property that whenever the lower bound 
\eqref{w13} holds for some positive number $\eta$ with $\eta\le d_kM$, then there exist 
$\widetilde{a}_k\in \F_q[t]$ and monic $\widetilde{g}_k\in \Fq[t]$ such that
\[
\ord (\widetilde{g}_k\alpha_kg^k-\widetilde{a}_k)<-kM+\epsilon M+D_k\eta \quad
\textup{and} \quad  \ord \widetilde{g}_k\leq \epsilon M+D_k\eta .
\] 
Let $g_k=\widetilde{g}_kg^k$ and $a_k=\widetilde{a}_k$. Since 
$(1-\epsilon )N-C\eta-2<M\le N$, for $N$ sufficiently large, we have 
\begin{align*}
\ord (g_k\alpha _k-a_k)&<-k\big( (1-\epsilon )N -C\eta-2\big) +\epsilon N+D_k\eta \\
&<-kN+\epsilon (k+2)N+\left( kC+D_k\right)\eta 
\end{align*}
and, on recalling \eqref{w11}, 
\[ 
\ord g_k\le (\epsilon M+D_k\eta )+k(\epsilon N+C\eta)\le \epsilon (k+1)N+(kC+D_k)\eta.
\]
Since $\epsilon>0$ is arbitrary, the conclusion of Theorem \ref{th:main2} follows for $k$ by 
taking $c_k=\min \{ c,d_k\}$ and $C_k=kC+D_k$. This confirms the inductive step, and 
thus the proof of the theorem is complete.  
\end{proof}

One can extend Theorem \ref{th:main2} to indices that are not in $\calK^*$. Recall the 
definition \eqref{eq:ktilde} of $\tK$. Then by induction on $n$, one can apply the method of 
the proof of Theorem \ref{th:main2} to obtain the following conclusion.

\begin{prop} \label{prop:varmain2}
Fix $q$ and a finite set $\calK \subset \Z^+$. There exist positive constants $c$ and $C$, 
depending only on $\calK$ and $q$, such that the following holds. Let $\epsilon>0$ and let 
$N$ be sufficiently large in terms of $\calK$, $\epsilon$ and $q$. Suppose that 
$f(u)=\sum_{r\in \calK\cup\{0\}}\alpha_r u^{r}$ is a polynomial with coefficients in 
$\K_\infty$ satisfying the bound
\[
\biggl| \sum_{x\in \GN} e(f(x))\biggr| \geq q^{N-\eta },
\]
for some positive number $\eta$ with $\eta\le cN$. Then, for any $k\in \tK$, there exist 
$a_k\in \Fq[t]$ and monic $g_k\in \Fq[t]$ such that
\[
\ord (g_k\alpha_k-a_k)<-kN+\epsilon N+C\eta \quad \textrm{and} \quad 
\ord g_k\leq \epsilon N+C\eta.
\]
\end{prop}

It seems that there is no simple description of the set $\widetilde{\calK}$. In many cases, 
it is apparent that $\widetilde{\calK}$ is larger than $\calK^*$. For example, if $p>3$ and 
$\calK=\{1,3, 3p+1\}$ (as in the first case of Example \ref{example-3}), then
\[
\calS (\calK)=\{1,2,3,p,p+1,2p,2p+1,3p,3p+1\},
\]
and so $\calK^*=\{3p+1\}$. Meanwhile, since $\calK_1=\{1,3\}$, one finds that 
$\calK_1^*=\{1,3\}$, and since $\calS(\calK_1)=\{1,2,3\}$, it follows from 
\eqref{eq:ktilde} that $\widetilde{\calK}=\calK$. More generally, if $(k,p)=1$ for any 
$k\in \calK$, then it can be proved by induction that $\tK=\calK$. On the other hand, if  
$p>3$ and $\calK = \{3, 4p\}$ (as in the second case of Example \ref{example-3}), 
then
\[
\calS(\calK)=\{1,2,3,p,2p,3p,4p\},
\]
and hence $\calK^*=\emptyset$. Thus we find that in this case, one has $\tK=\emptyset$. 
Therefore, we cannot go as far as proving Conjecture \ref{conj} by using this method.

\section{Equidistribution of polynomial sequences}\label{sec:equidistribution}
In this section, we first prove the equidistribution result recorded in Theorem 
\ref{th:main3}, and then discuss a variant of this theorem. The following lemma is essential 
for our proof of Theorem \ref{th:main3}. We again recall the set of exponents $\calK^*$ 
defined in \eqref{eq:k-star}.

\begin{lem}\label{pre-main3}
Fix $q$ and a finite set $\calK \subset \Z^+$. Let 
$f(u)=\sum_{r\in \calK\cup\{0\}}\alpha_r u^{r}$ be a polynomial with coefficients in 
$\K_\infty$. For $k\in \calK^*$, suppose that $k$ is maximal in $\calK$ and $\alpha_k$ is 
irrational. Then, for any fixed $\eta >0$, there exists $N_0\in \Z^+$ such that, for any 
$s\in \Fq[t]$, we have
\[
\biggl| \sum_{y\in \G_{N_0}}e(f(y+s))\biggr| <q^{N_0-\eta}.
\]
\end{lem}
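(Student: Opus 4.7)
The plan is to view $f(y+s)$ as a polynomial in the variable $y$, observe that its coefficient at $y^k$ remains $\alpha_k$ for every $s$, apply the Weyl-type minor arc estimate of Theorem~\ref{th:main1} uniformly in $s$, and then use Lemma~\ref{lem:diophantine} together with the irrationality of $\alpha_k$ to derive a contradiction.

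First I would expand $(y+s)^r=\sum_{j\preceq_p r}\binom{r}{j}y^j s^{r-j}$ via Lucas' theorem, which shows that, as a polynomial in $y$, $f(y+s)$ is supported on $\mathcal{S}(\calK)\cup\{0\}$. Because $k$ is maximal in $\calK$, the only $r\in\calK$ with $k\preceq_p r$ is $r=k$, so the coefficient of $y^k$ in $f(y+s)$ is exactly $\alpha_k$, independently of $s$. Lemma~\ref{lem:shadow} then gives that $k$ is maximal in $\mathcal{S}(\calK)$ and $k\in\mathcal{S}(\calK)^*$, so Theorem~\ref{th:main1} applied with support set $\mathcal{S}(\calK)$ is available with constants $c,C>0$ that depend only on $\calK$ and $q$, hence are uniform in $s$.

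Next I would argue by contradiction: suppose there is some $\eta>0$ for which the conclusion fails, so that for every $N\in\Z^+$ there exists $s_N\in\F_q[t]$ with $\bigl|\sum_{y\in\G_N}e(f(y+s_N))\bigr|\ge q^{N-\eta}$. Fix $\kappa>1$ (say $\kappa=2$) and $\epsilon=k/(2(1+\kappa))$ so that $\epsilon(1+\kappa)<k$. For all $N$ sufficiently large in terms of $\calK,\epsilon,q$, Theorem~\ref{th:main1} applied to $f(y+s_N)$ then produces $a_N,g_N\in\F_q[t]$ satisfying
\[
\ord(g_N\alpha_k-a_N)<-(k-\epsilon)N+C\eta,\qquad \ord g_N\le \epsilon N+C\eta.
\]

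Finally, for each large $M$, I would pick an integer $N$ in the interval $\bigl[(\kappa M+C\eta)/(k-\epsilon),\,(M-C\eta)/\epsilon\bigr)$; a short calculation shows this interval has length growing linearly in $M$ under the choice $\epsilon(1+\kappa)<k$, so such an $N$ exists once $M$ is large enough (and this $N$ is itself large, so the previous step applies). For this $N$, the pair $(a_N,g_N)$ satisfies $\ord g_N<M$ and $\ord(g_N\alpha_k-a_N)\le -\kappa M$, verifying the hypothesis of Lemma~\ref{lem:diophantine} for $\alpha_k$ and forcing $\alpha_k$ to be rational, a contradiction. The only delicate point will be the parameter bookkeeping in this last step; the uniformity in $s$ of the constants in Theorem~\ref{th:main1} is essentially automatic, since both the support $\mathcal{S}(\calK)$ of $f(y+s)$ and the coefficient at $y^k$ are independent of $s$.
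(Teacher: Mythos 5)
Your argument is correct and is essentially the paper's own proof: argue by contradiction, note that $f(y+s_N)$ is supported on $\mathcal{S}(\calK)$ with $y^k$-coefficient $\alpha_k$ and with $k\in\mathcal{S}(\calK)^*$ maximal there (Lemma \ref{lem:shadow}), apply Theorem \ref{th:main1} uniformly in $s$, and then contradict the irrationality of $\alpha_k$ via Lemma \ref{lem:diophantine}. The only difference is cosmetic parameter bookkeeping: the paper fixes $\epsilon=1/3$ and takes $N=[3(M-C\eta)]$ (so $\kappa=3/2$), whereas you take $\kappa=2$, $\epsilon=k/6$ and choose $N$ in an interval depending on $M$, which checks out just as well.
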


\begin{proof}
By way of deriving a contradiction, suppose that $\eta>0$, and that for any $N\in \Z^+$, 
there exists $s_N \in \Fq[t]$ such that
\begin{equation}\label{w14}
\biggl| \sum_{y \in \G_{N}} e(f(y+s_N)) \biggr| \geq q^{N-\eta}.
\end{equation}
We note that for each $s\in \Fq[t]$, the only monomials $y^r$ having non-zero coefficient 
in the expansion of $f(y+s)$ are those with $r\in \calS(\calK)$. Since $k\in \calK^*$ is 
maximal in $\calK$, it follows from Lemma \ref{lem:shadow} that $k$ is maximal in 
$\calS(\calK)$ and further that $k\in \calS(\calK)^*$. Moreover, the coefficient of $y^k$ in
$f(y+s)$ is $\alpha_k$. Applying Theorem \ref{th:main1} with $\epsilon=1/3$, we find that 
there exists a constant $C>0$ such that, for $N$ sufficiently large in terms of $\calK$ and 
$q$, there exist $a\in \Fq[t]$ and monic $g\in \Fq[t]$ having the property that 
\[
\ord (g\alpha_k-a)\leq -kN+N/3+C\eta \quad \textrm{and} \quad \ord g<N/3+C\eta .
\]
For each sufficiently large $M\in \Z^+$, we apply these inequalities with 
$N=\lfloor 3(M-C\eta)\rfloor$. Thus, we have 
\[
\ord (g\alpha_k -a)\leq -(3k-1)M+(3kC\eta +k-1/3)\leq -3M/2\quad \textup{and}\quad  
\ord g<M. 
\]
Since these inequalities hold for all sufficiently large $M\in \Z^+$, we deduce from Lemma 
\ref{lem:diophantine} that $\alpha_k$ is rational, contradicting the hypothesis that 
$\alpha_k$ is irrational. Consequently, the assumed lower bound \eqref{w14} is untenable, 
and the conclusion of the lemma follows.
\end{proof}

We are now equipped for the proof of Theorem \ref{th:main3}. 

\begin{proof}[Proof of Theorem \ref{th:main3}]
It is apparent that there is no loss of generality in assuming that $\alpha_0=0$. Let 
$k\in \calK^*$ and suppose that $\alpha_k$ is irrational. We prove Theorem \ref{th:main3} 
by downward induction on $k$ with respect to the partial order $\preceq_{p}$. Suppose 
first that $k$ is maximal in $\calK$ and $\eta>0$. Let $N_0$ be the natural number 
provided in the conclusion of Lemma \ref{pre-main3}. For any $N\geq N_0$, we can 
partition the set $\GN$ into $q^{N-N_0}$ blocks of the form 
\[
\mathcal{B}_s = \left\{y+s: y\in \G_{N_0}\right\},
\] 
where $s=t^{N_0}z$ for some $z\in \G_{N-N_0}$. Therefore, it follows from Lemma 
\ref{pre-main3} that 
\[
\biggl| \sum_{x\in \G_{N}}e(f(x))\biggr| \le q^{N-N_0}\sup_{s\in \Fq[t]}\biggl| 
\sum_{y\in \G_{N_0}}e(f(y+s))\biggr|<q^{N-N_0}q^{N_0-\eta}=q^{N- \eta}.
\]
Since $\eta>0$ is arbitrary, it follows that
\[
\lim_{N\rightarrow \infty}\frac{1}{q^N}\biggl| \sum_{x\in \G_{N}}e(f(x))\biggr| =0.
\]
We note that for any $m\in \F_q[t]\setminus \{ 0\}$, this relation holds with $f$ replaced by 
$mf$, where $mf$ is the polynomial
\[
mf(u)=\sum_{r\in \calK\cup \{0\}}m\alpha_r u^r.
\]
By reference to Theorem \ref{th:weyl}, we therefore conclude that Theorem \ref{th:main3} 
holds in the special case in which $k$ is maximal in $\calK$.\par 

Suppose next that the theorem is established for any $h\in \calK^*$ with $k\preceq_p h$ 
and $h\neq k$. We define $\calH_0$ and $\calH_1$ as in \eqref{k0k1}. Note that, should 
there exist $h\in \calH_0$ for which $\alpha_h$ is irrational, then Theorem \ref{th:main3} 
follows from the inductive hypothesis. Therefore, it suffices to consider the situation in 
which all of the coefficients $\alpha_h$ $(h\in \calH_0)$ are rational. Let $g$ be the 
common denominator of the coefficients $\alpha_h$ for $h\in \calK_0$. Then for any 
$s\in \Fq[t]$ and $M \in \Z^+$, we have 
\begin{align*}
\biggl| \sum_{y\in \GM}e(f(gy + s))\biggr| &=\biggl| \sum_{y\in \GM}e\biggl( 
\sum_{h\in \calK}\alpha_h(gy+s)^h\biggr) \biggr| \\
&=\biggl| \sum_{y\in \GM}e\biggl( \sum_{h\in \calH_0}\alpha_h\biggl( (gy+s)^h-s^h\biggr) 
+\sum_{h\in \calH_1}\alpha_h(gy+s)^h\biggr) \biggr| .
\end{align*} 
Here, we have made use of the observation that
\[
e\biggl( \sum_{h\in \calH_0}\alpha_h(-s^h)\biggr)
\]
is a unimodular constant independent of $y$. Since the definition of $g$ implies that 
$g\alpha_h\in \Fq[t]$ for each $h\in \calH_0$, we have 
\[
e\biggl( \sum_{h\in \calH_0}\alpha_h \biggl( (gy+s)^h -s^h\biggr) \biggr) =1.
\]
It follows that 
\begin{equation}\label{egy}
\biggl| \sum_{y\in \GM}e(f(gy+s))\biggr| =\biggl| \sum_{y\in \GM}e\biggl( 
\sum_{h\in \calH_1}\alpha_h (gy+s)^h\biggr) \biggr| .
\end{equation}
Given $N\in \Z^+$ with $N>\ord g$, we define the integer $M\in \Z^+$ by putting 
$M=N-\ord g$. Then we can partition the set $\G_N$ into $q^{N-M}$ blocks of the form 
\[
\mathcal{B}_s=\left\{ gy+s: y\in \G_M \right\},
\]
where $s\in \G_{\ord g}$. We now deduce from from \eqref{egy} that
\begin{align}
\biggl| \sum_{x\in \GN}e(f(x))\biggr| &\le q^{N-M}\max_{s\in \G_{\ord g}}\biggl| 
\sum_{y\in \GM}e(f(gy+s))\biggr| \notag \\
&=q^{N-M}\max_{s\in \G_{\ord g}}\biggl| \sum_{y\in \GM}e\biggl( \sum_{h \in \calH_1} 
\alpha_h (gy+s)^h \biggl) \biggr| . \label{main3-equal}
\end{align}

We observe that for each $s\in \Fq[t]$, the only monomials $y^r$ having non-zero 
coefficient in the expansion of
\begin{equation}\label{w15}
\sum_{h\in \calH_1}\alpha_h(gy+s)^h
\end{equation}
are those with $r\in \calS(\calH_1)$. Since $k\in \calK^*$ is maximal in $\calH_1$, we 
discern from Lemma \ref{lem:shadow} that $k$ is maximal in $\calS(\calH_1)$ and 
$k\in \calS(\calH_1)^*$. Furthermore, the coefficient of $y^k$ in the polynomial 
\eqref{w15} is $\alpha_k g^k$, which is irrational since $\alpha_k$ is irrational. We are now 
in the situation already handled in the first part of the proof, and thus, we have 
\[
\lim_{M\to \infty}\frac{1}{q^M}\biggl| \sum_{y\in \GM}e\biggl( \sum_{h\in \calH_1} 
\alpha_h (gy+s)^h\biggr) \biggr| =0.
\]
Then it follows from (\ref{main3-equal}) that 
\[
\lim_{N\rightarrow \infty}\frac{1}{q^N}\biggl| \sum_{x\in \G_{N}} e(f(x))\biggr| =0.
\]
We again note that for any $m\in \F_q[t]\setminus \{ 0\}$, this relation remains valid with 
$f$ replaced by $mf$, and thus Theorem \ref{th:weyl} shows the sequence 
$(f(x))_{x\in \Fq[t]}$ to be equidistributed in $\T$. This confirms the inductive step, and 
thus the proof of the theorem is complete.
\end{proof}

By an observation similar to the one made following the proof of Theorem \ref{th:main2}, 
one can apply the method of the proof of Theorem \ref {th:main3} to obtain the following 
result. Here, once again, we recall the definition \eqref{eq:ktilde} of the set of exponents 
$\tK$.

\begin{prop}\label{prop:varmain3}
Fix $q$ and a finite set $\calK \subset \Z^+$. Let 
$f(u)=\sum_{r\in \calK \cup \{0\}}\alpha_r u^{r}$ be a polynomial with coefficients in 
$\K_\infty$. Suppose that $\alpha_k$ is irrational for some $k\in \tK$. Then the sequence 
$(f(x))_{x \in \Fq[t]}$ is equidistributed in $\T$.
\end{prop}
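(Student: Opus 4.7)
The plan is to prove Proposition \ref{prop:varmain3} by induction on $n \ge 0$, establishing at stage $n$ the statement: whenever $\alpha_k$ is irrational for some $k \in \calK_n^*$, the sequence $(f(x))_{x \in \Fq[t]}$ is equidistributed in $\T$. The base case $n=0$ is exactly Theorem \ref{th:main3}. For the inductive step, fix $n \ge 1$, assume the statement for all $m<n$, and let $k \in \calK_n^*$ with $\alpha_k$ irrational. Since $\calK \setminus \calK_n = \bigsqcup_{m<n}\calK_m^*$, if any $\alpha_h$ with $h \in \calK \setminus \calK_n$ is irrational, then $h$ lies in some $\calK_m^*$ with $m<n$ and the inductive hypothesis closes the argument. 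We may therefore assume all such $\alpha_h$ are rational, and let $g \in \Fq[t] \setminus \{0\}$ be a common denominator.

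Following the proof of Theorem \ref{th:main3}, for $N > \ord g$ and $M = N - \ord g$, I partition $\G_N$ into $q^{\ord g}$ blocks $\{gy + s : y \in \G_M\}$ indexed by $s \in \G_{\ord g}$. Because $g\alpha_h \in \Fq[t]$ for every $h \in \calK \setminus \calK_n$ and $(gy+s)^h - s^h$ is divisible by $g$, the rational-coefficient contributions evaluate to $1$, and after absorbing the constant phase depending only on $s$ one obtains
\[ q^{-N}\Big|\sum_{x \in \G_N} e(f(x))\Big| \le q^{-M} \max_{s \in \G_{\ord g}} \Big|\sum_{y \in \G_M} e(F_s(y))\Big|, \]
where $F_s(y) = \sum_{h \in \calK_n} \alpha_h (gy + s)^h$ is a polynomial in $y$ supported on $\calS(\calK_n)$.

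To apply Theorem \ref{th:main3} to each $F_s$, I choose $h^*$ to be a maximal element (with respect to $\preceq_p$) of the nonempty set $I = \{h \in \calK_n : k \preceq_p h,\ \alpha_h \textup{ irrational}\}$, which contains $k$. From $k \in \calK_n^*$, $h^* \in \calK_n$, and $k \preceq_p h^*$, Lemma \ref{lem:shadow2} applied to the set $\calK_n$ gives $h^* \in \calK_n^*$, and Lemma \ref{lem:shadow}(2) upgrades this to $h^* \in \calS(\calK_n)^*$. The coefficient of $y^{h^*}$ in $F_s$ equals
\[ g^{h^*}\bigg(\alpha_{h^*} + \sum_{\substack{h \in \calK_n \\ h^* \preceq_p h,\ h \neq h^*}} \binom{h}{h^*} \alpha_h\, s^{h-h^*}\bigg), \]
and for each $h$ indexing the inner sum we have $k \preceq_p h^* \preceq_p h$, so the maximality of $h^*$ in $I$ forces $\alpha_h$ to be rational; the whole expression is therefore irrational. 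Theorem \ref{th:main3} applied to $F_s$ with the index $h^* \in \calS(\calK_n)^*$ then delivers equidistribution of $(F_s(y))_y$, so $q^{-M}|\sum_y e(F_s(y))| \to 0$ for each $s$. Since $\G_{\ord g}$ is finite, the maximum also tends to zero, and Weyl's criterion (Theorem \ref{th:weyl}) closes the induction; the identical argument applied to $mf$ for any nonzero $m \in \Fq[t]$ handles the remaining characters, since $m\alpha_k$ is also irrational.

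The main obstacle is manufacturing the required irrational coefficient inside $F_s$: the naive choice of index $k$ fails because the coefficient of $y^k$ is a linear combination over $\{h \in \calK_n : k \preceq_p h\}$, in which unknown irrational $\alpha_h$ might conspire to cancel $\alpha_k$. The maximality trick resolves this, and its soundness rests on the upward closure of $\calK_n^*$ inside $\calK_n$ supplied by Lemma \ref{lem:shadow2}, which guarantees $h^* \in \calK_n^*$ and hence $h^* \in \calS(\calK_n)^*$, the precise hypothesis needed to invoke Theorem \ref{th:main3} on $F_s$.
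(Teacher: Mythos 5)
Your proposal is correct and follows essentially the route the paper intends (the paper only sketches it): induction on $n$, discarding the coefficients indexed by $\calK\setminus\calK_n=\bigcup_{m<n}\calK_m^*$ --- which may be assumed rational, since otherwise the inductive hypothesis already applies --- via the substitution $x=gy+s$, and then invoking Theorem \ref{th:main3} for the reduced polynomial supported on $\mathcal{S}(\calK_n)$. Your selection of a $\preceq_p$-maximal $h^*$ among the irrational-coefficient indices above $k$ inside $\calK_n$ (justified by Lemmas \ref{lem:shadow2} and \ref{lem:shadow}) correctly supplies the detail the paper leaves implicit, namely why an irrational coefficient survives at an index lying in $\mathcal{S}(\calK_n)^*$ after the shift; it in effect replaces the inner downward induction used in the proof of Theorem \ref{th:main3} by a single maximal-element choice, which is a sound and slightly cleaner way to close the argument.
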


Of notable significance in this conclusion is the situation in which $(k,p)=1$ for all 
$k\in \calK$, for then we have $\tK=\calK$. Using the latter observation, we now show that 
the above proposition implies Conjecture \ref{conj} in the special case $q=p$. For the rest 
of this section, we assume that $q=p$.\par

Let $T:\K_\infty \rightarrow \T$ be the map defined in (\ref{eq:t}). Using the fact that 
$a^p=a$ for any $a\in \F_p$, one can show that for any $x \in \Fp[t]$, one has
\[
e\left( \alpha x^p\right) =e\left( T(\alpha) x \right).
\]
Therefore, for any $x\in \Fp[t]$ and $v\in \Z^+\cup\{0\}$, we have 
\begin{equation}\label{eq:t2}
e\left( \alpha x^{p^v}\right) =e\left( T^v(\alpha)x\right),
\end{equation}
where $T^v$ is the $v$-fold composition of $T$. Let
\[
f(u)=\sum_{r\in \calK\cup \{0\}}\alpha_r u^r\in \K_\infty[u],
\]
and let 
\begin{equation}\label{cali}
\calI =\{ k\in \Z^+:\text{$(k,p)=1$ and $p^v k\in \calK$ for some $v\in \Z^+\cup\{0\}$}
\}.
\end{equation}
For each $k\in \calI$, define 
\begin{equation}\label{eq:s}
S_k(f)=\sum_{\substack{v\geq 0\\p^v k\in \calK}}T^v(\alpha_{p^v k}).
\end{equation}
Then it follows from (\ref{eq:t2}) that for any $x\in \Fp[t]$, one has
\begin{equation}\label{esf}
e\left( f(x)\right) =e\biggl( \sum_{k\in \calI}S_k(f)x^k+\alpha_0\biggr) .
\end{equation}
Since $(k,p)=1$ for any $k\in \calI$, we have $\widetilde{\calI}=\calI$. Let 
$m\in \F_p[t]\setminus \{ 0\}$. Then Proposition \ref{prop:varmain3} shows that whenever 
there exists $k\in \calI$ such that $S_k(mf)$ is irrational, one has
\begin{equation}\label{eq:limit}
\lim_{N\rightarrow \infty}\frac{1}{q^N}\biggl| \sum_{x\in \GN}e(mf(x))\biggr| 
=\lim_{N\rightarrow \infty}\frac{1}{q^N}\biggl| \sum_{x\in \GN}
e\biggl( \sum_{k\in \calI}S_k(mf)x^k+m\alpha_0\biggr) \biggr| =0. 
\end{equation}
Therefore, on making use of Theorem \ref{th:weyl}, we may conclude as follows. 

\begin{cor}\label{cor:q=p}
Fix $q=p$ and a finite set $\calK \subset \Z^+$. Let 
$f(u)=\sum_{r\in \calK\cup\{0\}}\alpha_r u^{r}$ be a polynomial with coefficients in 
$\K_\infty$. Suppose that the polynomial $f$ satisfies the property that for some 
$k\in \calI$, we have
\begin{equation}\label{eq:irrational}
\text{$S_k(mf)$ is irrational for any $m\in \Fp[t]\setminus \{0\}$}. 
\end{equation}
Then the sequence $(f(x))_{x\in \Fp[t]}$ is equidistributed in $\T$.
\end{cor}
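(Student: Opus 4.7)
The plan is to verify Weyl's criterion (Theorem \ref{th:weyl}) for $(f(x))_{x \in \Fp[t]}$ by reducing each exponential sum to one to which Proposition \ref{prop:varmain3} applies. Fix an arbitrary $m \in \Fp[t] \setminus \{0\}$. Applying formula (\ref{esf}) to the polynomial $mf(u) = \sum_{r \in \calK \cup \{0\}} m\alpha_r u^r$, which is supported on $\calK \cup \{0\}$, gives
$$e(mf(x)) = e\!\left(\sum_{k \in \calI} S_k(mf)\, x^k + m\alpha_0\right) \qquad (x \in \Fp[t]).$$
Consequently, $\sum_{x \in \GN} e(mf(x))$ equals the constant $e(m\alpha_0)$ times the analogous sum for the polynomial $h_m(u) := \sum_{k \in \calI} S_k(mf)\, u^k$, which is supported on $\calI \subset \Z^+$.

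By hypothesis there is a fixed $k \in \calI$ for which $S_k(mf)$ is irrational. Because every element of $\calI$ is coprime to $p$, the observation made just after Proposition \ref{prop:varmain2} (by induction on $n$ one checks that $\calK_n^* = \calK_n$ whenever all indices are coprime to $p$) yields $\widetilde{\calI} = \calI$, so in particular $k \in \widetilde{\calI}$. Proposition \ref{prop:varmain3} then applies to $h_m$ and shows that the sequence $(h_m(x))_{x \in \Fp[t]}$ is equidistributed in $\T$. Applying Theorem \ref{th:weyl} to $h_m$ (with the trivial multiplier $1$), we obtain
$$\lim_{N \to \infty} \frac{1}{q^N}\left|\sum_{x \in \GN} e(h_m(x))\right| = 0,$$
and by the identity above this yields the same limit with $h_m$ replaced by $mf$. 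Since $m \in \Fp[t] \setminus \{0\}$ was arbitrary, a second appeal to Theorem \ref{th:weyl} concludes that $(f(x))_{x \in \Fp[t]}$ is equidistributed in $\T$.

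There is no serious obstacle: the substantive content — namely the minor-arc bound on $|\sum e(f(x))|$ forced by an irrational coefficient at an index in $\widetilde{\calK}$ — has already been packaged into Proposition \ref{prop:varmain3}, and (\ref{esf}) combined with $\widetilde{\calI} = \calI$ rewrites each relevant sum in exactly the form that proposition demands. The one point that genuinely needs the hypothesis in its full strength is that $S_k$ is not $\Fp[t]$-linear in its argument, since the map $T$ is only $\Fp$-linear on $\K_\infty$; this is why irrationality of $S_k(mf)$ must be assumed for every nonzero $m$ separately, rather than merely for $m = 1$.
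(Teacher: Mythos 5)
Your argument is correct and is essentially the paper's own proof: apply (\ref{esf}) to $mf$, use $\widetilde{\calI}=\calI$ (valid because every element of $\calI$ is coprime to $p$), invoke Proposition \ref{prop:varmain3} for the rewritten polynomial, and finish with Theorem \ref{th:weyl}. One small caveat: your parenthetical justification that $\calK_n^*=\calK_n$ whenever all indices are coprime to $p$ is not literally true (for $p>3$ and $\calI=\{1,3,3p+1\}$ one has $\calI^*=\{3p+1\}$, since $3p\preceq_p 3p+1$ and $p\preceq_p 3p+1$), but the fact you actually use, $\widetilde{\calI}=\calI$, is correct and is exactly the observation stated after Proposition \ref{prop:varmain2}, established by a slightly subtler induction in which elements may enter $\calK_n^*$ only at later stages $n$.
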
 

We remark that since the map $T$ does not commute with multiplication by $m$, the 
condition (\ref{eq:irrational}) may not be described in simpler terms. This condition might 
also be unnecessary for the equidistribution of $(f(x))_{x \in \Fp[t]}$. Regardless of these 
observations, suppose that $k\in \calK$ and $p^v k\not \in \calK$ for any $v\in \Z^+$. 
Then $S_k(f)=\alpha_k$ and $S_k(mf)=m\alpha_k$ for any $m\in \Fp[t]\setminus\{0\}$. 
Therefore, should $\alpha_k$ be irrational, then the condition (\ref{eq:irrational}) is 
satisfied. This simple observation establishes Conjecture \ref{conj} in the special case 
$q=p$. We can formulate this conclusion more precisely in the following corollary. 

\begin{cor}\label{cor:q=pw}
Fix $q=p$ and a finite set $\calK \subset \Z^+$. Let 
$f(u)=\sum_{r\in \calK\cup\{0\}}\alpha_r u^{r}$ be a polynomial with coefficients in 
$\K_\infty$.~Suppose that $\alpha_k$ is irrational for some $k\in \calK$ satisfying 
$p\nmid k$ and furthermore $p^v k\not \in \calK$ for any $v\in \Z^+$. Then the sequence 
$(f(x))_{x\in \Fp[t]}$ is equidistributed in $\T$.
\end{cor} 

\section{Van der Corput and intersective sets in $\Fq[t]$} \label{sec:vdc}

\subsection{Background and statement of results.} We define the {\it upper density} 
${\overline d}(\mathcal{A})$ of a set $\mathcal{A}\subset \Z^+$ by means of the relation
\[
\overline{d}(\mathcal{A})=\limsup_{N\rightarrow \infty}\frac{\text{card}(
\mathcal{A}\cap \{1, \ldots, N\})}{N}.
\] 
We say that $\mathcal{A}$ is \textit{dense} if $\overline{d}(\mathcal{A})>0$. A set 
$\mathcal{H}\subset \Z^+$ is called \textit{intersective} if, for any dense subset 
$\mathcal{A}\subset \Z^+$, there exist $a,a'\in \mathcal{A}$ such that 
$a-a'\in \mathcal{H}$. Thus, the set $\mathcal{H}$ is intersective if for any dense subset 
$\mathcal{A}$ of positive integers, one has 
$\mathcal{H}\cap (\mathcal{A}-\mathcal{A})\ne \emptyset $. In the late 1970s, 
S\'{a}rk\"{o}zy \cite{sarkozy1} and Furstenberg \cite{f2} proved independently that the set 
$\{ n^2:n \in \Z^+\}$ is intersective. Their proofs make use of the circle method and 
ergodic theory, respectively. S\'{a}rk\"{o}zy went on to prove that the sets 
$\{ n^2-1:n\in \Z^+\setminus\{1\}\}$ and $\{p-1:\text{$p\in \Z$ is prime}\}$ are also 
intersective (see \cite{sarkozy3}). We refer the reader to a survey paper of the first author 
\cite{le} for results and open problems regarding intersective sets.\par

In a seemingly unrelated context, motivated by van der Corput's difference theorem, Kamae 
and Mend\`{e}s France \cite{km} made the following definition. A set 
$\mathcal{H}\subset \Z^+$ is said to be \textit{van der Corput} if the sequence 
$(a_n)_{n=1}^\infty$ is equidistributed $(\mod 1)$ whenever the sequence 
$(a_{n+h}-a_n )_{n=1}^\infty$ is equidistributed $(\mod 1)$ for each $h\in \mathcal{H}$. 
Therefore, it follows from van der Corput's difference theorem that $\Z^+$ is van der
Corput. However, there are sparser sets which are van der Corput. In \cite{km}, 
Kamae and Mend\`{e}s France proved that any van der Corput set is intersective. Their 
result gives another approach to intersective sets. The converse of their theorem is not 
true. In \cite{bourgain}, Bourgain constructed a set that is intersective but not van der 
Corput.\par

Let $\Phi(u)\in \Z[u]$ and consider the set $\{ \Phi(n): n\in \Z \}\cap \Z^+$. We note that 
for any $g\in \Z^+$, the set of all multiples of $g$ is dense. Therefore, if the set 
$\{\Phi(n): n \in \Z\}\cap \Z^+$ is van der Corput (and hence intersective), then $g$ divides 
$\Phi(n)$ for some $n\in \Z$. The following result of Kamae and Mend\`{e}s France 
\cite{km} shows that the divisibility condition is not only necessary, but also sufficient. 

\begin{prop}\label{prop:intersective} 
Let $\Phi(u)\in \Z[u]\setminus\{0\}$, and suppose that $\Phi$ has a root $(\mod \,g)$ for 
any $g\in \Z^+$. Then the set $\left\{ \Phi(n): n\in \Z \right\} \cap \Z^+$ is van der Corput 
(and hence intersective) whenever it is infinite. 
\end{prop}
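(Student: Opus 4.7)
The plan is to invoke the Fourier-analytic criterion of Kamae and Mend\`es France: $\mathcal{H} \subset \Z^+$ is van der Corput if and only if every Borel probability measure $\mu$ on $\R/\Z$ satisfying $\hat\mu(h)=0$ for all $h \in \mathcal{H}$ has $\mu(\{0\}) = 0$. (That van der Corput implies intersective is the theorem of Kamae and Mend\`es France already stated in Section 6.1.) Arguing by contradiction, suppose such a $\mu$ exists with $\mu(\{0\}) > 0$; I will derive a contradiction by combining Weyl's classical equidistribution theorem with the divisibility hypothesis on $\Phi$.

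Because $\mu$ is a positive measure, $\hat\mu$ also vanishes on $-\mathcal{H}$, and since $(\Phi(n))_{n \in \Z^+}$ lies in $\mathcal{H} \cup (-\mathcal{H}) \cup \{0\}$ off a finite set, $\frac{1}{N} \sum_{n=1}^N \hat\mu(-\Phi(n)) \to 0$ as $N \to \infty$. Interchanging sum and integral yields
$$0 = \int_{\R/\Z} \Psi(x)\, d\mu(x), \qquad \Psi(x) := \lim_{N \to \infty} \frac{1}{N} \sum_{n=1}^N e(-\Phi(n) x).$$
By Weyl's classical theorem $\Psi(x) = 0$ for irrational $x$ and $\Psi(0) = 1$, while for rational $x = a/g$ in lowest terms $\Psi(a/g) = \frac{1}{g} \sum_{j=1}^g e(-a \Phi(j)/g)$. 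Hence
$$\mu(\{0\}) + \sum_{g \geq 2}\; \sum_{1 \leq a < g,\; (a, g) = 1} \mu(\{a/g\})\, \Psi(a/g) = 0,$$
so it remains to force the rational-atom sum to vanish.

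To kill the rational atoms I would iterate, replacing the weight $e(-\Phi(n)\, \cdot)$ by weighted averages whose Fourier spectrum still lies in $\pm \mathcal{H} \cup \{0\}$. The divisibility hypothesis provides, for every modulus $g^*$, an integer $n_{g^*}$ with $g^* \mid \Phi(n_{g^*})$, giving a tool to pin down the contribution of rational atoms whose denominator divides $g^*$ (since $e(-\Phi(n_{g^*}) a/g) = 1$ whenever $g \mid g^*$). Letting $g^*$ run through a sequence with $\operatorname{lcm}(g^*_1, \ldots, g^*_k) \to \infty$, the rational atoms of $\mu$ are forced to contribute zero against a judiciously chosen sequence of weighted averages, leaving $\mu(\{0\}) = 0$, the desired contradiction.

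The main obstacle is the construction of weighted averages with Fourier spectrum confined to $\pm \mathcal{H} \cup \{0\}$: naive products $\prod_j (1 + e(-\Phi(n_{g^*_j})\, \cdot))$ produce exponents of the form $\pm \Phi(n_{g^*_{j_1}}) \pm \cdots \pm \Phi(n_{g^*_{j_k}})$, which need not lie in $\pm \mathcal{H}$. Circumventing this requires working at the level of the spectral measure and passing to weak-$*$ limits rather than constructing explicit trigonometric polynomials; this is the technical heart of the Kamae--Mend\`es France argument and is precisely where the divisibility hypothesis enters in an essential way.
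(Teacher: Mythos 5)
Your framework is the right one (the Kamae--Mend\`es France/Ruzsa criterion, Weyl's theorem at irrational points, the divisibility hypothesis at rational points), and your first computation with the plain averages $\frac1N\sum_{n\le N}e(-\Phi(n)x)$ is fine, but it only yields $\mu(\{0\})+\sum_{a/g}\mu(\{a/g\})\Psi(a/g)=0$, which is not a contradiction; and at the decisive step --- producing test functions with spectrum inside $\pm\mathcal{H}\cup\{0\}$ that also detect every rational atom --- you stop and assert that explicit trigonometric averages cannot do the job and that one must pass to weak-$*$ limits of spectral measures. That is a genuine gap, and the assertion is in fact incorrect: no products of the form $\prod_j\bigl(1+e(-\Phi(n_{g_j^*})\,\cdot)\bigr)$ are needed. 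The missing idea is to compose $\Phi$ with the arithmetic progression through a root: for each modulus $g^*$ pick $n_{g^*}$ with $g^*\mid\Phi(n_{g^*})$ and set
\[
T_{g^*,N}(x)\;=\;\frac1N\sum_{n=1}^{N}e\bigl(x\,\Phi(g^*n+n_{g^*})\bigr).
\]
Its frequencies are themselves values of $\Phi$, hence (apart from the $O_\Phi(1)$ indices with $\Phi(g^*n+n_{g^*})\le 0$, after choosing the direction in which $\Phi\to+\infty$) lie in $\mathcal{H}$, so $\bigl|\int_{\T}T_{g^*,N}\,d\mu\bigr|\ll_\Phi \mu(\T)/N$ whenever $\widehat{\mu}$ vanishes on $\mathcal{H}$ (positivity of $\mu$ handles the conjugate frequencies). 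Moreover $g^*\mid\Phi(g^*n+n_{g^*})$ for every $n$, so $T_{g^*,N}(a/g)=1$ whenever $g\mid g^*$; and for irrational $x$ Weyl's theorem applied to the polynomial $x\,\Phi(g^*u+n_{g^*})$, whose degree-$k$ coefficient $a_k(g^*)^kx$ is irrational, gives $T_{g^*,N}(x)\to0$. Taking $g^*=M!$ (say), extracting by diagonalization a subsequence $N_i$ along which the limits at the countably many rationals exist, one gets $\lim_{M}\lim_{i}T_{M!,N_i}(x)=1$ for every rational $x$ and $0$ for every irrational $x$, so two applications of dominated convergence give $\mu(\{0\})\le\sum_{x\ \mathrm{rational}}\mu(\{x\})=\lim_M\lim_i\int_\T T_{M!,N_i}\,d\mu=0$, the desired contradiction.

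For comparison: the paper does not reprove Proposition \ref{prop:intersective} (it cites Kamae--Mend\`es France), but its proof of the $\F_q[t]$ analog, Theorem \ref{th:vdc1}, is exactly the argument sketched above, with $g_M$ the product of all monic polynomials of degree less than $M$, $s_M$ a root of $\Phi$ modulo $g_M$, $T_{M,N}(\alpha)=q^{-N}\sum_{x\in\GN}e(\alpha\Phi(g_Mx+s_M))$, the equidistribution input replaced by Theorem \ref{th:main3}, and the same double-limit/diagonalization and $\widehat{T_{M,N}}$-support computation. So the construction you declared to be the inaccessible ``technical heart'' is a one-line composition with a progression, and without it your proposal does not reach the conclusion.
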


Notice that these notions of intersective and van der Corput sets, and the concommitant 
conclusions, extend readily to the situation that $\calA\subset \Z$ and 
$\calH\subset \Z\setminus \{0\}$. Given the similarity of $\Z$ and $\Fq[t]$, it is natural to 
study analogous notions in $\Fq[t]$. We define the {\it upper density} 
${\overline d}(\mathcal{A})$ of a set $\mathcal{A} \subset \Fq[t]$ by means of the 
relation
\[
\overline{d}(\mathcal{A})=\limsup_{N\rightarrow \infty}
\frac{\text{card}(\mathcal{A}\cap \GN)}{q^N}.
\]
We say a set $\mathcal{A}$ is {\it dense} if $\overline{d}(\mathcal{A})>0$. A set 
$\mathcal{H}\subset \Fq[t]\setminus \{0\}$ is called \textit{intersective} if, for any dense 
subset $\mathcal{A}\subset \F_q[t]$, we have 
$\mathcal{H}\cap (\mathcal{A}-\mathcal{A})\neq \emptyset$. A set 
$\mathcal{H}\subset \Fq[t]\setminus \{0\}$ is said to be \textit{van der Corput} if the 
sequence $(a_{x})_{x \in \Fq[t]}$ is equidistributed in $\T$ whenever the sequence 
$(a_{x+h}-a_{x})_{x \in \Fq[t]}$ is equidistributed in $\T$ for each $h \in \mathcal{H}$. 
Many characterizations of intersective and van der Corput sets carry over from $\Z$ to 
$\Fq[t]$, and we refer the reader to the Ph.D. thesis of the first author 
\cite[Chapter 2]{lethesis} for an exposition. In particular, in \cite[Theorem 2.3.5]{lethesis}, 
it was proved that any van der Corput set in $\Fq[t]$ is intersective. It is an interesting 
problem to construct a set in $\Fq[t]$ that is intersective but not van der 
Corput (Bourgain's construction in $\Z$ is very specific to the real numbers).\par 

We now consider explicit examples of intersective and van der Corput sets in $\Fq[t]$ that 
are of arithmetic interest, similar to the results of S\'{a}rk\"{o}zy and Furstenberg. In the 
work of the first two authors \cite{ll}, intersectivity is obtained, in a quantitative sense,  for 
the set $\left\{ x^2:x \in \Fq[t]\right\}\setminus\{0\}$. Furthermore, in joint work of the 
first author with Spencer \cite{ls}, intersectivity, in a quantitative sense, is also established 
for the set
\[
\left\{l+r:\text{$l\in \Fq[t]$, with $l$ monic and irreducible}\right\},
\]
for any fixed $r\in \F_q\setminus\{0\}$. Motivated by Proposition \ref{prop:intersective}, 
we formulate the following conjecture. 

\begin{conj}\label{conj:intersective}
For $\Phi(u)\in \Fq[t,u]\setminus\{0\}$, suppose that 
\begin{equation}\label{eq:intersective}
\text{for all $g\in \Fq[t]$, there exists $x\in \Fq[t]$ such that $\Phi(x)\equiv 0\mmod{g}$}. 
\end{equation}
Then the set $\left\{\Phi(x): x\in \Fq[t]\right\} \setminus\{0\}$ is van der Corput (and hence 
intersective). 
\end{conj}

Again, the divisibility condition is easily seen to be necessary. Quite surprisingly, this 
conjecture remains an open problem when the degree of $\Phi$ is greater than or equal to 
$p$. When $\Phi(0)=0$, it follows from the polynomial Szemer\'edi theorem for modules 
over countable integral domains, proved by Bergelson, Leibman and McCutcheon \cite{blm}, 
that the set $\left\{ \Phi(x): x\in \Fq[t]\right\} \setminus \{0\}$ is intersective. Recently, 
using the polynomial method of Croot, Lev and Pach \cite{clp}, it was shown by Green 
\cite{green} that this conjecture holds in a strong quantitative sense, under the condition 
that $\Phi(u)\in \Fq[u]$ and the number of roots of $\Phi(u)$ in $\Fq$ is coprime to $q$. 
The latter constraint was recently removed by Li and Sauermann \cite{LS2022}. We note 
that the condition \eqref{eq:intersective} is weaker than demanding that $\Phi(u)$ has a 
root in $\Fq[t]$. Indeed, by analogy with well-known examples over the rational integers, 
we observe that when $p>2$ and $a$ and $b$ are distinct irreducible polynomials of even 
degree in $\F_p[t]$ with $b$ a quadratic residue modulo $a$ (and hence also $a$ a 
quadratic residue modulo $b$), the polynomial $\Phi(u)=(u^2-a)(u^2-b)(u^2-ab)$ fails to 
have roots in $\F_p[t]$, yet nonetheless possesses solutions modulo $g$, for all 
$g\in \F_p[t]$. We direct the reader to Li \cite[Example 1]{li} and Yamagishi 
\cite[Appendix A]{yamagishi2} for examples of polynomials $\Phi$ satisfying 
\eqref{eq:intersective} but not having roots in $\Fq[t]$.\par

Equipped now with our equidistribution theorem, we make some progress in this section 
towards Conjecture \ref{conj:intersective}. In Section 6.3 we prove the following conclusion, 
which is slightly stronger than Theorem \ref{th:sarkozy}. Here, we recall the definition 
\eqref{eq:k-star} of the set of exponents $\calK^*$.   

\begin{thm}\label{th:vdc1}
Let $\calK$ be a finite set of positive integers, suppose that $a_r\in \Fq[t]$ for $r\in 
\calK\cup \{0\}$, and define
\[
\Phi(u)=\sum_{r\in \calK \cup \{0\}}a_r u^r .
\]
Suppose that $\Phi$ satisfies the condition \eqref{eq:intersective}. Suppose further that 
$a_k\neq 0$ for some $k\in \calK^*$. Then the set 
$\{ \Phi(x): x\in \Fq[t] \} \setminus\{0\}$ is van der Corput (and hence intersective).
\end{thm}

We remark that, as a direct consequence of Theorem \ref{th:vdc1}, one finds that 
Conjecture \ref{conj:intersective} holds whenever the degree of $\Phi$ is coprime to $p$. 
Moreover, in view of Proposition \ref{prop:varmain3}, the condition in the theorem requiring 
$a_k\neq 0$ for some $k\in \calK^*$ can be relaxed to one requiring only that $a_k\neq 0$ 
for some $k \in \tK$, where $\tK$ is defined as in (\ref{eq:ktilde}).\par

By assuming the stronger conditions $q=p$ and $\Phi(0)=0$, we obtain the following result 
in Section 6.3. 

\begin{thm}\label{th:vdc2}
Let $\Phi(u) \in \Fp[t,u]\setminus\{0\}$, and suppose that $\Phi(0)=0$. Then the set 
$\{ \Phi(x): x\in \Fp[t] \} \setminus \{0\}$ is van der Corput (and hence intersective).
\end{thm}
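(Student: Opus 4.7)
The plan is to follow the same scheme as the proof of Theorem \ref{th:vdc1} but with the equidistribution input (Proposition \ref{prop:varmain3}) replaced by the sharper Corollary \ref{cor:q=p}, which is available only when $q=p$. Since $\Phi(0)=0$, the divisibility hypothesis of Theorem \ref{th:vdc1} is automatic (with the root $x=0$ working for every modulus $g$); the novelty is that Theorem \ref{th:vdc2} must accommodate polynomials like $\Phi(u)=u^{p^{v}}$, for which no coefficient survives in $\widetilde{\calK}$ and so Theorem \ref{th:vdc1} does not apply.

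First, I would invoke the Fourier-analytic characterization of van der Corput sets in $\F_p[t]$ from \cite[Chapter 2]{lethesis} (the same tool that drives Theorem \ref{th:vdc1}): to show that $\{\Phi(x): x\in \F_p[t]\}\setminus\{0\}$ is van der Corput, it suffices to verify that, for each irrational $\theta\in \K_{\infty}$, the sequence $(\theta\Phi(x))_{x\in \F_p[t]}$ is equidistributed in $\T$; the rational values of $\theta$ are controlled by the divisibility condition. Using the characteristic-$p$ identity
$$e\bigl(m\theta\,\Phi(x)\bigr)\;=\;e\!\left(\sum_{k\in \calI} S_k(m\theta\Phi)\,x^{k}\right),\qquad x\in \F_p[t],$$
from the preamble of Corollary \ref{cor:q=p}, Weyl's criterion reduces the desired equidistribution to the hypothesis of Corollary \ref{cor:q=p}: exhibiting a single $k_0\in \calI$ for which $S_{k_0}(m\theta\Phi)\notin \K$ for every $m\in \F_p[t]\setminus\{0\}$.

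I would take $k_0:=\max \calI$, which lies in $\calI^*$ because $p^{v}k_{0}>k_{0}$ exceeds every element of $\mathcal{S}(\calI)$ for $v\ge 1$. Writing $v_0$ for the largest $v\ge 0$ with $p^{v_0}k_0\in \calK$, the sum
$$S_{k_{0}}(m\theta\Phi)\;=\;\sum_{v:\,p^{v}k_{0}\in \calK}T^{v}\bigl(m\theta\,a_{p^{v}k_{0}}\bigr)$$
has ``leading'' summand $T^{v_{0}}(m\theta\,a_{p^{v_{0}}k_{0}})$. The task is then to establish the irrationality of the full sum for each $m\ne 0$. This amounts to a digit-extraction argument generalising the remark following Example \ref{ex:1}: if $S_{k_0}(m\theta\Phi)$ were rational for every $m\in \F_p[t]\setminus\{0\}$, one could, by varying $m$ across all monomials $t^{c}$ and using the $\F_p$-linearity of each $T^{v}$, successively force columns of digits of $\theta\,a_{p^{v_0}k_0}$ at positions of the form $-1-c-jp^{v_0}$ to be eventually periodic in $j$, contradicting the irrationality of $\theta$.

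The main obstacle will be this last step: one must isolate the leading-order contribution $T^{v_0}$ from the lower-order terms $T^{v}$ ($v<v_{0}$) in such a way that no cancellation between them produces rationality. I expect to handle this by downward induction on $v_0-v$ combined with growth-rate estimates in the norm $|\cdot|$ on $\K_{\infty}$, in the spirit of Lemma \ref{lem:diophantine}. Once this irrationality is secured, Corollary \ref{cor:q=p} delivers the equidistribution, and the Fourier characterization of van der Corput sets completes the proof.
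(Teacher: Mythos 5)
Your plan has a fatal gap at its very first reduction. You propose to show that for every irrational $\theta$ the sequence $(\theta\Phi(x))_{x\in\Fp[t]}$ is equidistributed in $\T$, by producing a $k_0\in\calI$ with $S_{k_0}(m\theta\Phi)$ irrational for all $m\neq 0$. Both statements are false in general, and they fail exactly for the motivating example you single out. Take $\Phi(u)=u^{p}$ (so $\Phi(0)=0$, $\calK=\{p\}$, $\calI=\{1\}$, $k_0=1$): by (\ref{eq:s}) one has $S_{1}(m\theta\Phi)=T(m\theta)$, and Example \ref{ex:1} supplies irrational $\theta$ with $T(\theta)=0$, so already $m=1$ gives a rational value; moreover Example \ref{ex:1} states that for such $\theta$ the sequence $(\theta x^{p})_{x}$ is \emph{not} equidistributed in $\T$. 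The same happens for $\Phi(u)=u^{p^{v}}$ with $S_1(\theta\Phi)=T^{v}(\theta)$. Hence the sufficient criterion you want to invoke (equidistribution of $(\theta\Phi(x))_x$ for every irrational $\theta$, rationals handled by divisibility) is simply not available for these $\Phi$, and your digit-extraction/periodicity argument is aimed at proving a false statement; no amount of care in separating the $T^{v_0}$ term from the lower-order $T^{v}$'s can repair it. This is also why the paper warns, right after Corollary \ref{cor:q=p}, that condition (\ref{eq:irrational}) may fail to be necessary and cannot be simplified.

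The paper's proof gets around precisely this obstruction by never partitioning $\T$ into rational versus irrational $\alpha$. Keeping the measure-theoretic characterization and the averages $T_{M,N}(\alpha)=q^{-N}\sum_{x\in\GN}e(\alpha\Phi(g_Mx))$, it splits $\T$ according to the set $\calQ=\{\alpha\in\T: S_k(\alpha\Phi)\text{ is irrational for some }k\in\calI\}$. For $\alpha\in\calQ$, Proposition \ref{prop:varmain3} (via (\ref{eq:limit}), using that every $k\in\calI$ is coprime to $p$, so $\widetilde{\calI}=\calI$) gives $T_{M,N}(\alpha)\to 0$. For $\alpha\notin\calQ$ — which includes the problematic irrational $\alpha$ with all $S_k(\alpha\Phi)$ rational, such as $\theta$ with $T(\theta)=0$ above — the family of polynomials $\sum_{k\in\calI}S_k(\alpha\Phi)y^k$ is countable, a diagonal subsequence $N_i$ is extracted, and since $\Phi(0)=0$ the substitution $x\mapsto g_Mx$ makes every term $S_k(\alpha\Phi)(g_Mx)^k$ integral once $g_M$ absorbs the (countably many) denominators, so $T_{M,N}(\alpha)=1$ for large $M$. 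Because the final inequality only requires $\lim_M\lim_i\int_\T T_{M,N_i}\,d\mu\ge\mu(\{0\})$, it is harmless that the limit equals $1$ at these bad irrational points. That dichotomy over $\calQ$, rather than over rationality of $\alpha$, is the missing idea your proposal needs.
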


We remark here that the conclusion of Theorem \ref{th:main2} can be applied to prove 
intersectivity of the set $\{ \Phi(x): x \in \Fq[t] \} \setminus \{0\}$ in Theorem
\ref{th:vdc1} in a quantitative sense, in a manner similar to that employed in the proof of 
\cite [Theorem 3]{ll}. However, we opt to make use of Theorem \ref{th:main3} since the 
deduction is quicker, and the van der Corput property is a stronger notion than intersectivity. 

\subsection{Comparison with Bergelson-Leibman's result}\label{sec:bg}
Bergelson and Leibman \cite{bl} also applied their equidistribution result to study 
intersective sets in $\Fq[t]$. As such, our results in this section overlap with the conclusion 
of their Theorem 9.5, though they are not identical. Before proceeding with the proofs of 
Theorems \ref{th:vdc1} and \ref{th:vdc2}, we make a comparison between these theorems 
and \cite[Theorem 9.5]{bl}, which we rephrase below.

\begin{theorem*}[Bergelson-Leibman] Let $\Phi(u)\in \Fq[t,u]\setminus\{0\}$, and suppose 
that $\Phi(0)=0$. Then the set $\{ \Phi(x): x\in \Fq[t]\} \setminus \{0\}$ is intersective. 
Furthermore, the same conclusion holds provided that $\Phi$ satisfies the 
condition\footnote{See the remark in \cite[p. 949]{bl}, though there is a misprint in the 
definition of intersectivity therein.}$^,$\footnote{Just prior to the submission of this paper, 
Ackelsberg and Bergelson uploaded a paper \cite{AB2023} to the arXiv in which some 
correction and clarification concerning their notion of intersectivity over $\Fq[t]$ is made 
(see the first footnote on page 2 of \cite{AB2023} and the accompanying discussion). 
Nonetheless, at this time we remain unable to identify a source in the literature for a proof 
of Conjecture \ref{conj:intersective}, and it seems fair to describe the current status of the 
notion of intersectivity associated with this perspective as being in a state of flux.} that
\begin{equation}\label{eq:bl-intersective}
\text{for all subgroups $\Lambda$ of finite index in $(\Fq[t],+)$, there exists $x\in \Fq[t]$ 
such that $\Phi(x)\in \Lambda$}.
\end{equation}
\end{theorem*}

Bergelson and Leibman proved this theorem following the proof by Furstenberg \cite{f2} of 
S\'{a}rk\"{o}zy's theorem in $\Z$ (and in fact they proved a Khintchine-type theorem for 
single recurrence). On the other hand, our proofs of Theorems \ref{th:vdc1} and 
\ref{th:vdc2} follow the treatment of Kamae and Mend\`es France of van der Corput sets in 
$\Z$. Since in $\Fq[t]$, van der Corput sets and intersective sets are (conjecturally) two 
distinct notions, our own results and those of Bergelson and Leibman \cite[Theorem 9.5]{bl} 
do not imply each other.\par

The condition \eqref{eq:bl-intersective} is clearly necessary in order that the set 
$\{ \Phi(x): x\in \Fq[t]\} \setminus \{0\}$ be intersective. It is also easy to see that the 
condition \eqref{eq:bl-intersective} (an algebraic condition) implies \eqref{eq:intersective} 
(an arithmetic condition). We note, however, that there are plenty of subgroups of finite 
index in the additive group $\Fq[t]$ which are not of the shape $g\Fq[t]$ for any 
$g\in \Fq[t]$. For each irrational $\alpha\in \K_\infty$, an example of such a subgroup is the 
Bohr set consisting of all poynomials $x\in \Fq[t]$ satisfying the condition 
$\ord \{ \alpha x\}<-1$. We cannot help but wonder if the conditions 
\eqref{eq:bl-intersective} and \eqref{eq:intersective} are in fact the same condition. (This 
issue does not arise in $\Z$, since all subgroups of finite index of $\Z$ are of the form 
$a\Z$ for some $a \neq 0$.)

\begin{question}\label{q:intersective}
Does the condition \eqref{eq:intersective} imply \eqref{eq:bl-intersective}? In other words, 
as far as polynomials in $\Fq[t]$ are concerned, does ``meeting all subgroups of arithmetic 
nature'' imply ``meeting all subgroups of finite index''?
\end{question}
 
\subsection{The proofs of Theorems \ref{th:vdc1} and \ref{th:vdc2}}
Among the many characterizations of van der Corput sets in $\Fq[t]$, we will apply the 
following one found in \cite[Theorem 2.4.5 (2)]{lethesis}. Let $\mu$ be a finite 
non-negative measure on 
$\T$. We say that $\mu $ is {\it continuous} at $0$ if $\mu(\{0\})=0$. For any 
$h\in \F_q[t]$, the {\it Fourier transform} of $\mu$ is denoted by $\widehat{\mu }$ and 
defined by
\[
\widehat{\mu}(h)=\int_{\T} e(-\alpha h)\, {\rm d}\mu(\alpha).
\]
We say that $\widehat{\mu}$ {\it vanishes} on a set $\mathcal{H}\subset \F_q[t]$ if 
$\widehat{\mu}(h)=0$ for all $h \in \mathcal{H}$. 

\begin{thm}[Kamae \& Mend\`{e}s France, Ruzsa]\label{theoremw20}
A set $\mathcal{H}\subset \Fq[t]\setminus \{ 0 \}$ is van der Corput if and only if any finite 
measure $\mu$ on $\T$, with $\widehat{\mu}$ vanishing on $\mathcal{H}$, is continuous 
at $0$.
\end{thm}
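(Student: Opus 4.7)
My plan is to handle both directions via the spectral theory of bounded sequences on $\F_q[t]$, which is the Pontryagin dual of the compact group $\T$: by Herglotz--Bochner, positive-definite functions on $\F_q[t]$ correspond to finite positive measures on $\T$.

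For the direction ($\Leftarrow$), I assume every finite measure $\mu$ on $\T$ with $\widehat{\mu}|_{\mathcal{H}} = 0$ satisfies $\mu(\{0\}) = 0$, and suppose for contradiction that $\mathcal{H}$ is not van der Corput. By Theorem \ref{th:weyl}, there exist a sequence $(a_x)_{x \in \F_q[t]}$ with $(a_{x+h} - a_x)_x$ equidistributed for every $h \in \mathcal{H}$, together with $m \in \F_q[t] \setminus \{0\}$, $\delta > 0$, and $N_k \to \infty$ satisfying $|q^{-N_k} \sum_{x \in \G_{N_k}} e(m a_x)| \geq \delta$. Set $b_x = m a_x$; since $m \neq 0$, Weyl's criterion again shows the differences $(b_{x+h} - b_x)_x$ remain equidistributed along $\mathcal{H}$. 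Define the autocorrelations
\[\widetilde{\gamma}_N(h) = \frac{1}{q^N} \sum_{x \in \G_N} e(b_{x+h} - b_x).\]
These are positive definite in $h$, because the substitution $x \mapsto x - h_2$ exhibits $\widetilde{\gamma}_N(h_1 - h_2) = q^{-N} \sum_x e(b_{x+h_1}) \overline{e(b_{x+h_2})}$ as a Gram matrix entry. A diagonal extraction from $\{N_k\}$ produces a pointwise limit $\gamma : \F_q[t] \to \C$ which is positive definite, satisfies $\gamma(0) = 1$, and vanishes on $\mathcal{H}$. By Herglotz--Bochner, $\gamma = \widehat{\sigma}$ for some probability measure $\sigma$ on $\T$; the hypothesis forces $\sigma(\{0\}) = 0$.

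To close the contradiction, I would use that $\G_M$ is a subgroup of $\G_N$ when $M \leq N$ to write $\sum_{x \in \G_N} e(b_x) = q^{-M} \sum_{x \in \G_N} \sum_{h \in \G_M} e(b_{x+h})$, and then apply Cauchy--Schwarz in $x$ (together with the substitution above) to obtain the van der Corput-type inequality
\[\Big|\frac{1}{q^N} \sum_{x \in \G_N} e(b_x)\Big|^2 \leq \frac{1}{q^M} \sum_{h \in \G_M} \widetilde{\gamma}_N(h).\]
Sending $N = N_k \to \infty$ at fixed $M$ yields $\delta^2 \leq q^{-M} \sum_{h \in \G_M} \gamma(h)$. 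The orthogonality relation recorded in the Preliminaries gives $q^{-M} \sum_{h \in \G_M} e(-h\alpha) \to \mathbf{1}_{\{\alpha = 0\}}$ pointwise on $\T$, so dominated convergence in the integral representation $\gamma(h) = \int e(-h\alpha)\, d\sigma(\alpha)$ shows the right side tends to $\sigma(\{0\}) = 0$ as $M \to \infty$, contradicting $\delta > 0$.

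For the reverse direction ($\Rightarrow$), given $\mu$ with $\widehat{\mu}|_{\mathcal{H}} = 0$ and $\mu(\{0\}) > 0$, I would construct a sequence witnessing the failure of the van der Corput property. The strategy is to realize $\mu$ as the spectral measure of a unit-modulus sequence $c_x = e(a_x)$, that is, $q^{-N} \sum_x c_{x+h} \overline{c_x} \to \widehat{\mu}(h)$ for every $h \in \F_q[t]$. Then $(a_{x+h} - a_x)_x$ has vanishing Weyl sum for $h \in \mathcal{H}$, while the same orthogonality argument shows $|q^{-N} \sum_x c_x|^2 \to \mu(\{0\}) > 0$, so $(a_x)$ is not equidistributed. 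The standard construction uses a stationary random model (independent $\T$-valued phases indexed by $\F_q[t]$ whose joint law has spectral measure $\mu$), with a Borel--Cantelli argument extracting a deterministic realization along which all the required limits hold. The main obstacle I anticipate lies here: Weyl's criterion demands equidistribution of $(m(a_{x+h} - a_x))_x$ for \emph{every} nonzero $m \in \F_q[t]$, so the construction must simultaneously control the spectral measures of $(e(m a_x))_x$ for all $m$, each of which must still vanish on $\mathcal{H}$; by comparison, the $(\Leftarrow)$ direction is a clean compactness-plus-Bochner argument whose only delicate point is the averaging inequality above, which transfers cleanly to $\F_q[t]$ precisely because $\G_M$ is an honest subgroup of $\G_N$.
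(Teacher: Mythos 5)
A preliminary remark: the paper does not prove this statement at all --- it is quoted from the first author's thesis (\cite[Theorem 2.4.5 (2)]{lethesis}) --- so there is no in-paper argument to compare you with; note also that the paper only ever uses the ``if'' direction (the measure condition implies van der Corput), which is precisely the half you prove completely. Your ($\Leftarrow$) argument is correct and well adapted to $\Fq[t]$: because $\GM$ is a subgroup of $\GN$ for $M\le N$, shifts by $h\in\GM$ permute $\GN$, so the correlations $\widetilde{\gamma}_N$ are exactly (not merely approximately) positive definite and your averaged van der Corput inequality holds with no boundary terms; Weyl's criterion (Theorem \ref{th:weyl}) applied to the equidistributed differences with the nonzero multiplier $m$ gives $\widetilde{\gamma}_N(h)\to 0$ for $h\in\mathcal{H}$; diagonal extraction plus Herglotz--Bochner for the discrete group $\Fq[t]$ (whose dual is $\T$) yields $\sigma$ with $\widehat{\sigma}$ vanishing on $\mathcal{H}$ (the sign convention is harmless since $\gamma(-h)=\overline{\gamma(h)}$), and the orthogonality relation of Section \ref{sec:prelim} together with dominated convergence gives $\delta^2\le\sigma(\{0\})=0$, the desired contradiction. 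This is the standard Kamae--Mend\`es France argument transported to function fields, and the transport is clean for exactly the reason you identify.

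The ($\Rightarrow$) direction, however, is only a sketch with a genuine gap, which you partly flag yourself. Realizing $\mu$ as the spectral measure of a unit-modulus sequence $(e(a_x))_{x\in\Fq[t]}$ controls only the $m=1$ Weyl sums of the differences: $q^{-N}\sum_{x\in\GN}e(a_{x+h}-a_x)\to\widehat{\mu}(h)=0$ does not make $(a_{x+h}-a_x)_x$ equidistributed in $\T$, since Theorem \ref{th:weyl} demands decay of $q^{-N}\bigl|\sum_{x\in\GN}e\bigl(m(a_{x+h}-a_x)\bigr)\bigr|$ for every $m\in\Fq[t]\setminus\{0\}$, and the hypothesis on $\mu$ says nothing about the pushforwards of $\mu$ under $\alpha\mapsto m\alpha$. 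Moreover, the specific model you propose cannot even produce the required single spectral measure: for a family of independent $\T$-valued phases the correlation at any $h\neq 0$ equals $\bigl|\E\, e(a_0)\bigr|^2=:c$, so the spectral measure is forced to be $c\,\delta_0+(1-c)\lambda$, whose Fourier transform equals $c$ on all of $\mathcal{H}$; hence it vanishes on $\mathcal{H}$ only when $c=0$, i.e.\ only when there is no atom at $0$. So genuine long-range dependence is unavoidable, and controlling simultaneously the spectral measures of all the multiplied sequences $(e(ma_x))_x$ is exactly the missing content. The known proofs of this implication (Kamae--Mend\`es France, Ruzsa) do not proceed this way; they pass through an equivalent characterization by nonnegative trigonometric polynomials with spectrum in $\mathcal{H}\cup\{0\}$ and small constant term (a duality step) and a more careful construction of the counterexample sequence. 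As a proof of the implication the paper actually invokes your proposal is fine; as a proof of the stated equivalence it is incomplete.
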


We are now equipped to prove Theorems \ref{th:vdc1} and \ref{th:vdc2}.

\begin{proof}[Proof of Theorem \ref{th:vdc1}]
Suppose that $\Phi(u) = \sum_{k \in \calK\cup \{0\}}a_r u^r\in \Fq[t,u]$ has a root 
$(\mod{g})$ for any $g\in \F_q[t]\setminus\{0\}$. Suppose further that $a_k\neq 0$ for 
some $k\in \calK^*$. Let 
\begin{equation}\label{w16}
\mathcal{H}=\{ \Phi(x): x\in \Fq[t]\} \setminus \{0\}.
\end{equation}
Also, let $\alpha \in \T$ be irrational, and consider $s\in \Fq[t]$ and monic $g\in \Fq[t]$. By 
the orthogonality relation (\ref{eq:orthogonal2}), we have
\begin{align*}
\frac{1}{q^N}\sum_{\substack{x\in \GN\\ x\equiv s\, (\mod g)}} 
e\left( \alpha \Phi (x)\right)&=\frac{1}{q^N}\sum_{x\in \GN}e\left( \alpha \Phi (x)\right)
\frac{1}{|g|}\sum_{y\in \G_{\ord g}}e\biggl( \frac{y(x-s)}{g} \biggr) \\
&=\frac{1}{|g|}\sum_{y \in \G_{\ord g}}\frac{1}{q^N}\sum_{x\in \GN}
e\biggl( \alpha \Phi (x)+\frac{y(x-s)}{g}\biggr). 
\end{align*}
We observe that the coefficient of $x^k$ in the polynomial $\alpha \Phi (x)+y(x-s)/g$ is 
either $\alpha a_k$ or $\alpha a_k + y/g$, according to whether $k\neq 1$ or $k =1$, and 
in either case this coefficient is irrational. Therefore, it follows from Theorem 
\ref{th:main3} that for any $y\in \G_{\ord g}$, we have
\[
\lim_{N\rightarrow \infty}\frac{1}{q^N}\biggl| \sum_{x\in \GN}
e\biggl( \alpha \Phi (x)+\frac{y(x-s)}{g}\biggr) \biggr| =0,
\]
whence
\[
\lim_{N\rightarrow \infty}\frac{1}{|g|}\sum_{y \in \G_{\ord g}}\frac{1}{q^N}
\biggl| \sum_{x\in \GN}e\biggl( \alpha \Phi (x)+\frac{y(x-s)}{g}\biggr) \biggr| =0.
\]
Combining these relations, we infer that for any irrational $\alpha \in \T$, and for all 
$s\in \F_q[t]$ and monic $g\in \F_q[t]$, one has 
\begin{equation}\label{eq:detector2}
\lim_{N\rightarrow \infty}\frac{1}{q^N}\biggl| \sum_{x\in \GN}
e\left( \alpha \Phi (gx+s)\right)\biggr| =|g|\lim_{N\rightarrow \infty}\frac{1}{q^N}
\biggl| \sum_{\substack{x\in \GN \\ x\equiv s\mmod{g}}}e\left( \alpha \Phi (x)\right) 
\biggr| =0.
\end{equation}
 
\par For any $M \in \Z^+$, let $g_M$ be the product of all of the monic polynomials in 
$\G_M$. We consider a root $s_M\in \Fq[t]$ of $\Phi \mmod{g_M}$, the existence of which 
is guaranteed by our hypotheses concerning $\Phi$. For $\alpha \in \T$, let 
\begin{equation}\label{w19}
T_{M,N}(\alpha)=\frac{1}{q^N}\sum_{x\in \GN}e(\alpha \Phi ( g_M x+s_M)).
\end{equation}
It is useful also to define the associated Fourier coefficients
\[
{\widehat{T_{M,N}}}(h)=\int_\T T_{M,N}(\alpha)e(-\alpha h)\, {\rm d}\alpha .
\]
Then
\[
T_{M,N}(\alpha)=\sum_{h\in \Fq[t]} {\widehat{T_{M,N}}}(h)e(\alpha h).
\]
We now analyze the quantity $T_{M,N}(\alpha)$, dividing our discussion into cases 
according to whether $\alpha$ is rational or irrational.

\noindent {\bf Case 1.} Suppose that $\alpha\in \T$ is irrational. In this case, we find from 
\eqref{eq:detector2} that for any $M\in \Z^+$ and any irrational $\alpha \in \T$, we have
\[
\lim_{N\rightarrow \infty}T_{M,N}(\alpha)=0.
\]

\noindent {\bf Case 2.} Suppose that $\alpha \in \T$ is rational. In this case, we observe 
that a trivial estimate supplies the bound $|T_{M,N}(\alpha)|\le 1$, so that the sequence 
$\left( T_{M,N}(\alpha)\right)_{N\in \Z^+}$ is bounded uniformly in $M$ and $\alpha$. 
Thus, since the set
\[
\{(\alpha ,M):\text{$\alpha \in \T$ is rational and $M\in \Z^+$}\}
\]
is countable, it follows from a diagonalization process that we can extract a subsequence 
$(N_i)_{i=1}^\infty$ of the natural numbers having the property that, for any $M \in \Z^+$ 
and any rational $\alpha \in \T$, the limit 
\[
\lim_{i\rightarrow \infty}T_{M,N_i}(\alpha)
\]
exists. We observe next that $s_M$ is a root of $\Phi\mmod{g_M}$, and hence 
$\Phi(g_M x+s_M)$ is divisible by $g_M$. Consequently, whenever $M$ is large enough that 
$g_M\alpha\in \F_q[t]$, we have $T_{M,N}(\alpha) = 1$.\par

Combining the analyses of the above two cases, we discern that
\[
\lim_{M\rightarrow \infty}\lim_{i\rightarrow \infty}T_{M,N_i}(\alpha)=
\begin{cases}0,&\text{when $\alpha$ is irrational,}\\
1,&\text{when $\alpha$ is rational.}\end{cases}
\]
Now let $\mu$ be a finite non-negative measure on $\T$. By applying the dominated 
convergence theorem twice, we see that 
\[
\lim_{M\rightarrow \infty}\lim_{i\rightarrow \infty}\int_\T T_{M,N_i}(\alpha )
\, {\rm d}\mu (\alpha)=\int_\T \lim_{M\rightarrow \infty}\lim_{i\rightarrow \infty} 
T_{M,N_i}(\alpha )\, {\rm d}\mu (\alpha) 
=\usum_{\substack{\alpha \in \T\\ \text{$\alpha$ rational}}}\mu (\{ \alpha\}),
\]
whence
\begin{equation}\label{w17}
\lim_{M\rightarrow \infty}\lim_{i\rightarrow \infty}\int_\T T_{M,N_i}(\alpha )
\, {\rm d}\mu (\alpha)\geq \mu(\{ 0\}). 
\end{equation}

\par Suppose next that $\widehat{\mu }$ vanishes on $\mathcal{H}$. We note that, on 
recalling the definition \eqref{w16} of $\mathcal{H}$, the definition of $T_{M,N}$ implies 
that we have $\widehat{T_{M,N}}(h)\neq 0$ only if $h\in \mathcal{H}\cup\{0\}$. 
Therefore, we have
\[
\biggl| \int_\T T_{M,N}(\alpha)\, {\rm d}\mu (\alpha)\biggr|=\biggl| \sum_{x\in \Fq[t]}
\widehat{T_{M,N}}(x){\overline{\widehat{\mu}(x)}}\biggr| 
=|\widehat{T_{M,N}}(0)\widehat{\mu}(0)|=|\widehat{T_{M,N}}(0)|\mu(\T ).
\]
On recalling \eqref{w19}, we find that
\[
|\widehat{T_{M,N}}(0)|=\frac{1}{q^N}\text{card}\{ x\in \G_N:\Phi(g_Mx+s_M)=0\}\le 
\frac{\text{deg}(\Phi)}{q^N}.
\]
By working harder, one can confirm that this upper bound $\text{deg}(\Phi)/q^N$ may be 
replaced by $1/q^M$ whenever $M$ is large enough in terms of the coefficients of 
$\Phi(u)$. Hence, we deduce that
\begin{equation}\label{w18}
\biggl| \int_\T T_{M,N}(\alpha)\, {\rm d}\mu (\alpha)\biggr|\leq 
\frac{\text{deg}(\Phi)}{q^N}\mu(\T ).
\end{equation}
Combining the two inequalities \eqref{w17} and \eqref{w18}, we find that $\mu(\{0\})=0$ 
for any finite non-negative measure $\mu $ on $\T$ with $\widehat{\mu}$ vanishing on 
$\mathcal{H}$. Therefore, we deduce from Theorem \ref{theoremw20} that 
$\mathcal{H}$ is van der Corput. 
\end{proof}

\begin{proof} [Proof of Theorem \ref{th:vdc2}]
Suppose that $q=p$ and $\Phi(u)=\sum_{r\in \calK}a_r u^r\in \F_p[t,u]$. Let
\[
\mathcal{H}=\{ \Phi(x): x\in \Fp[t] \} \setminus \{0\}.
\]
Also, let $\calI$ and $S_k(\Phi)$ $(k\in \calI)$ be defined as in (\ref{cali}) and (\ref{eq:s}),
respectively. We have seen in (\ref{esf}) that 
\[
e(\alpha \Phi(x))=e\biggl( \sum_{k\in \calI}S_k(\alpha \Phi )x^k\biggr).
\]
For any $M\in \Z^+$, let $g_M$ be the product of all of the monic polynomials in $\G_M$. 
Then, when $\alpha \in \T$, we put
\[
T_{M,N}(\alpha)=\frac{1}{p^N}\sum_{x \in \GN}e(\alpha \Phi (g_M x))=
\frac{1}{p^N}\sum_{x\in \GN}e\biggl( \sum_{k\in \calI}S_k(\alpha \Phi)(g_M x)^k\biggr).
\] 
If we now define
\[
\calQ =\{ \alpha \in \T: \text{$S_k(\alpha \Phi)$ is irrational for some $k\in \calI$} \},
\]
then we see from (\ref{eq:limit}) that for any $\alpha \in \calQ$, we have
\[
\lim_{N\rightarrow \infty}T_{M,N}(\alpha )=0.
\]
On the other hand, when $\alpha \not \in \calQ$, then $S_k(\alpha \Phi)$ is rational for all 
$k\in \calI$. Since the rational elements $\alpha\in \T$ are countable, the set of all 
polynomials of the form
\[
\sum_{k\in \calI}S_k(\alpha \Phi)y^k\quad (\alpha \not \in \calQ)
\]
is countable. It is worth noting at this point that the set $\T \setminus \calQ$ itself need not 
be countable. Since $|T_{M,N}(\alpha)|\le 1$, it follows via a diagonalization process that 
we can extract a subsequence $(N_i)_{i=1}^\infty$ of natural numbers having the property 
that, for any $M\in \Z^+$ and any $\alpha \not \in \calQ$, the limit
\[
\lim_{i \rightarrow \infty}T_{M,N_i}(\alpha)
\]
exists. Also, by following an argument similar to that applied in Case 2 of the proof of 
Theorem \ref{th:vdc1}, we find that for $M$ sufficiently large, one has 
$T_{M,N}(\alpha )=1$ for any $\alpha \not \in \calQ$. It follows that 
\[
\lim_{M\rightarrow \infty }\lim_{i\rightarrow \infty }T_{M,N_i}(\alpha )=
\begin{cases}0,&\text{when $\alpha \in \calQ$,}\\
1,&\text{when $\alpha \not \in \calQ$.}\end{cases}
\]
We may now argue as in the proof of Theorem \ref{th:vdc1}, mutatis mutandis, to confirm 
that $\mu \left( \{0 \} \right)=0$ for any finite non-negative measure $\mu$ on $\T$ 
satisfying the 
property that $\widehat{\mu }$ vanishes on $\mathcal{H}$. Therefore, we deduce from 
Theorem \ref{theoremw20} that $\mathcal{H}$ is van der Corput. 
\end{proof}

\section{Glasner sets in $\Fq[t]$} \label{sec:glasner}
\subsection{Background and statement of results.} We first introduce some notation and 
nomenclature relevant for the discussion of Glasner sets in $\Fq[t]$. A subset 
$Y\subset \R/\Z$ is called  $\epsilon$-\textit{dense} in $\R/\Z$ if it intersects every interval 
of length $2\epsilon$ in $\R/\Z$. A \textit{dilation} of $Y$ is a set of the form 
$nY=\{ ny:y\in Y\}\subset \R /\Z$ for some $n\in \Z$. In 1979, Glasner \cite{glasner} 
proved that for any infinite subset $Y$ of $\R /\Z$ and any $\epsilon>0$, there exists 
$n\in \Z$ having the property that the dilation $nY$ is $\epsilon$-dense in $\R /\Z$. It 
transpires that the same conclusion can be obtained when one restricts $n$ to be an 
element of a relatively sparse subset of the integers. Motivated by Glasner's theorem, we 
say that a set $\calH \subset \Z$ is \textit{Glasner} if for any infinite subset $Y$ of $\R /\Z$ 
and any $\epsilon>0$, there exists $n\in \calH$ having the property that $nY$ is 
$\epsilon$-dense in $\R /\Z$. In their paper \cite{ap}, Alon and Peres showed that the set 
of primes is Glasner. They also proved that if $\Phi(u)\in \Z[u]$ is a non-constant 
polynomial, then the set $\{ \Phi(n):n\in \Z \}$ is Glasner. By using harmonic analysis, Alon 
and Peres obtained quantitative versions of their results. Thus, for each of the above two 
Glasner sets $\calH$ and any $\epsilon>0$, there exists an $\epsilon$-dense dilation $nY$ 
of $Y$ with $n \in \calH$, provided that the cardinality $|Y|$ of $Y$ is sufficiently large in 
terms of $\epsilon$ and $\calH$. The method and results of Alon and Peres were 
generalized to multi-dimensional tori in \cite{kl} and \cite{bf}.\par

One can define an analog of the notion of a Glasner set in $\Fq[t]$. For $M\in \Z^+$, a 
subset $Y\subset \T$ is called $q^{-M}$-\textit{dense} in $\T$ if it intersects every cylinder 
set $\mathcal{C}$ of radius $q^{-M}$ in $\T$. We call a set $\calH \subset \Fq[t]$ 
\textit{Glasner} if for any infinite subset $Y\subset \T$ and any $M\in \Z^+$, there exists 
$x\in \calH$ having the property that the dilation $xY$ is $q^{-M}$-dense in $\T$. In view 
of the result of Alon and Peres, one may ask if the set of values of a polynomial with 
coefficients in $\Fq[t]$ is Glasner. However, the following examples show that an exact 
analog of the result of Alon and Peres is {\it not} true in general.

\begin{ex}\label{ex:glasner1}
Let $Y$ be the set of all $\alpha \in \T$ with $T(\alpha)=0$, where $T$ is the map defined 
in \eqref{eq:t}. Then $Y$ is infinite (and indeed uncountable). We have seen in Example 
\ref{ex:1} that for any $x\in \Fq[t]$ and $\alpha\in Y$, we have $\res (x^p\alpha )=0$. 
This shows that the set $\{x^p:x \in \Fq[t]\}$ is not Glasner, since for any $x\in \Fq[t]$, the 
set $x^pY$ fails to be $q^{-1}$-dense.
\end{ex}

\begin{ex}\label{ex:glasner2}
Let us assume that $q=p$. Let $Y$ be the set of all $\alpha \in \T$ with 
$T(\alpha)+\alpha=0$. One sees again that $Y$ is infinite (and indeed uncountable). Then 
for any $x \in \Fq[t]$, we have $\res ((x^p+x)\alpha) = \res ((T(\alpha)+\alpha)x)=0$. 
This shows that the set $\{ x^p+x:x \in \Fq[t]\}$ is not Glasner, since for any $x\in \Fq[t]$, 
the set $(x^p+x)Y$ fails to be $q^{-1}$-dense.
\end{ex}

One could formulate a conjecture similar to Conjecture \ref{conj} asserting that Examples 
\ref{ex:glasner1} and \ref{ex:glasner2} encapsulate all the obstructions preventing a 
polynomial sequence in $\Fq[t]$ from being Glasner. We have some preliminary ideas that 
might establish such a conjecture, and this is a subject to which we intend to return on a 
future occasion. For now we note that such a conjecture would follow from Conjecture 
\ref{conj}. Moreover, partial progress is made possible by making use of Theorem 
\ref{th:main3}. Here, once again, we recall the definition \eqref{eq:k-star} of the set of 
exponents $\calK^*$.

\begin{thm}\label{theoremw21}
Let $\calK$ be a finite set of positive integers, suppose that $a_r\in \Fq[t]$ for 
$r\in \calK\cup\{0\}$, and define
\[
\Phi(u)=\sum_{r\in \calK\cup\{0\}}a_r u^{r}.
\]
Suppose further that $a_k\neq 0$ for some $k\in \calK^*$ with $k>1$. Then the set 
$\{ \Phi(x):x\in \Fq[t]\}$ is Glasner.
\end{thm}

Notice the extra requirement $k>1$ in Theorem \ref{theoremw21}, a condition absent from 
the hypotheses of Theorem \ref{th:sarkozy}. By adapting the harmonic-analytic approach of 
Alon and Peres described in \cite {ap}, we prove the following quantitative version of 
Theorem \ref{th:glasner} analogous to the bound of Alon and Peres obtained in 
\cite[Theorem 6.3]{ap}.

\begin{thm}\label{th:glasner2}
Let $\calK$ be a finite set of positive integers, suppose that $a_r\in \Fq[t]$ for 
$r\in \calK\cup\{0\}$, and define
\[
\Phi(u)=\sum_{r\in \calK\cup\{0\}}a_r u^{r}.
\]
Suppose further that $a_k\neq 0$ for some $k\in \calK^*$ with $k>1$. Then there exists a 
positive constant $C$, depending on $\Phi$, such that whenever $M>0$ and 
$|Y|\ge q^{CM}$, there is a dilation of the form $\Phi(x)Y$ of $Y$ that is $q^{-M}$-dense.
\end{thm}

We remark that, as a direct consequence of Theorem \ref{th:glasner}, the set of values of 
$\Phi$ is Glasner whenever $\deg \Phi>1$ and $(\deg \Phi, p)=1$. Also, in view of 
Proposition \ref{prop:varmain3}, the condition $a_k\neq 0$ for some $k\in \calK^*$ can be 
relaxed to the constraint that $a_k\neq 0$ for some $k \in \tK$, where $\tK$ is defined as 
in (\ref{eq:ktilde}). 

\subsection{Proof of Theorem \ref{th:glasner2}}
We first derive the following cheap consequence of Theorem \ref{th:main2}. It is analogous 
to Hua's classical bound on complete exponential sums with polynomial argument over the 
integers, a version of which could certainly be derived in the setting of $\Fq[t]$. Whilst the 
latter would deliver stronger conclusions than those we obtain below, the extra effort 
involved has no impact on the application that we have in mind.    

\begin{lem}\label{lem:hua}
Let $\calK$ be a finite set of positive integers, suppose that $a_r\in \Fq[t]$ for 
$r\in \calK\cup\{0\}$, and define
\[
\Phi(u)=\sum_{r\in \calK\cup\{0\}}a_r u^{r}.
\]
Suppose further that $a_k\neq 0$ for some $k\in \calK^*$ with $k>1$. Then there exists a 
constant $C_k>1$, depending only on $k$, $\calK$ and $q$, such that for any monic 
$g\in \Fq[t]$ and any $\epsilon>0$, we have
\begin{equation}\label{eq:hua}
\biggl| \sum_{x\in \G_{\ord_g}}e\biggl(\frac{\Phi(x)}{g} \biggr) \biggr| 
\ll_{\calK ,\epsilon ,q}|(g,a_k)|^{1/C_k}|g|^{1-1/C_k+\epsilon }.
\end{equation}
\end{lem}

\begin{proof} We fix the positive constants $c_k$ and $C_k$, depending at most on $k$, 
$\calK$ and $q$, in accordance with the conclusion of Theorem \ref{th:main2}. Write 
$N=\ord g$ and $M=\ord (g,a_k)$, and put
\begin{equation}\label{w22}
\eta =\min \{ c_kN, (1/C_k-\epsilon)N-M/C_k\}.
\end{equation}
On observing 
that the bound \eqref{eq:hua} is trivial when $\eta \le 0$, we see that there is no loss of 
generality in assuming henceforth that $\eta>0$. We may also suppose that $N$ is 
sufficiently large in terms of $\calK$, $\epsilon$ and $q$. Suppose, by way of deriving a 
contradiction, that
\[
\biggl| \sum_{x\in \G_{N}}e\biggl( \frac{\Phi(x)}{g}\biggr) \biggr| \geq q^{N-\eta }.
\]
Then we infer from Theorem \ref{th:main2} that there exist $b\in \Fq[t]$ and monic 
$h\in \Fq[t]$ such that
\begin{equation}\label{eq:huaproof1}
\ord \left( h\frac{a_k}{g}-b\right) <-kN+\epsilon N+C_k\eta \quad \textup{and} \quad 
\ord h\le \epsilon N+C_k\eta.
\end{equation}

\par We see from \eqref{w22} that $M+C_k\eta\le (1-C_k\epsilon)N$. It therefore follows 
from \eqref{eq:huaproof1} that 
\[
\ord (g,a_kh)\leq M+\ord h\leq M+\epsilon N+C_k\eta \le (1+\epsilon -C_k\epsilon )N<N.
\]
Since $\ord g=N$, we deduce that $g$ does not divide $(g, a_k h)$. Consequently, the 
fraction $ha_k/g$ has a reduced form with denominator $g/(g,a_kh)$ having order at least 
$1$. Thus, we have
\begin{equation}\label{eq:huaproof3}
\ord \left( h\frac{a_k}{g}-b\right) \geq \ord \left( \frac{1}{g/(g,a_kh)}\right) =
\ord (g,a_k h)-\ord g\geq M - N. 
\end{equation} 
Combining (\ref{eq:huaproof1}) and (\ref{eq:huaproof3}), we obtain the bound
\[
M-N\leq -kN+\epsilon N+C_k\eta \le -kN+\epsilon N+(1-C_k\epsilon)N-M.
\]
Since $k>1$, we arrive at a contradiction. We are therefore forced to conclude that 
$\eta\le 0$, a scenario in which the conclusion of the lemma follows, as we have already 
observed.
\end{proof}

As we have already noted, one may prove the bound \eqref{eq:hua} by more classical 
methods. Thus, with additional effort it would be possible to establish a version of Lemma 
\ref{lem:hua} with $C_k=\text{deg}(\Phi)$. We also need an analog of 
\cite[Proposition 1.3]{ap}, the statement of which requires that we introduce some 
additional notation. Consider a set $Y=\{y_1,\ldots ,y_k\}\subset \T$. For each 
$g\in \Fq[t]\setminus\{0\}$, we denote by $h_g=h_g(Y)$ the number of pairs $(i,j)$ with 
$1\leq i,j\leq k$ and $i\neq j$ satisfying $g(y_i-y_j)\in \Fq[t]$. Finally, we define 
$H_L=H_L(Y)$ by putting
\[
H_L(Y)=\sum_{g\in \GL\setminus\{0\}}h_g(Y).
\]

\begin{lem}\label{lem:h1}
Let $Y=\{y_1,\ldots ,y_k\}$ be a set of $k$ distinct elements in $\T$. Then for each 
non-negative integer $L$, one has $H_L(Y)\leq kq^{2L}$.
\end{lem}

\begin{proof}
For each index $i$ with $1\leq i\leq k$ and $g\in \GL\setminus\{0\}$, the number of indices 
$j$ for which $g(y_i-y_j)\in \Fq[t]$ is at most $|g|\leq q^L$. Thus, we deduce that
\[
H_L(Y)\le \sum_{1\le i\le k}\sum_{g\in \GL\setminus \{0\}}q^L\le kq^{2L},
\]
and the proof of the lemma is complete.
\end{proof}

\begin{proof}[Proof of Theorem \ref{th:glasner2}] 
We prove Theorem \ref{th:glasner2} by establishing the contrapositive. Suppose then that a 
set of $k$ distinct elements $Y=\{y_1,\ldots, y_k\}\subset \T$ has the property that 
$\Phi(x)Y$ is not $q^{-M}$-dense for any $x\in \Fq[t]$. We seek to derive an upper bound 
for $k$ of the shape $k<q^{CM}$, with $C$ a suitable positive constant depending on 
$\Phi$.\par

Consider any element $x\in \Fq[t]$. We may suppose that $\Phi(x)Y$ is not 
$q^{-M}$-dense in $\T$, and hence there exists $\xi_x\in \T$ having the property that all 
elements of $\Phi(x)Y$ lie outside of the cylinder set $\{ \xi \in \T: |\xi -\xi_x|<q^{-M} \}$. 
Thus, for all $1\leq i\leq k$, we have
\[
\ord \left\{ \Phi(x)y_i - \xi_x \right\} \geq -M.
\]
In view of \eqref{eq:orthogonal1}, we see that for each $x\in \Fq[t]$ and index $i$, one 
has 
\[
\sum_{z\in \GM}e\left( z\left( \Phi(x)y_i-\xi_x\right) \right)=0. 
\]
Consequently, isolating the term $z=0$ in each sum, we deduce that for each positive 
integer $N$ one has the relation
\[
\sum_{x\in \GN}\sum_{i=1}^k\sum_{z\in \GM\setminus \{0\}}
e\left( z\left( \Phi(x)y_i-\xi_x\right) \right) =-kq^N. 
\]
Interchanging the innermost summations and applying Cauchy's inequality, we therefore 
obtain the relation
\begin{align*}
k^2 q^{2N}&\leq q^{N+M}\sum_{x\in \GN}\sum_{z\in \GM\setminus\{0\}} 
\biggl| \sum_{i=1}^k e\left( z\left( \Phi(x)y_i-\xi_x\right) \right) \biggr|^2\\
&=q^{N+M}\sum_{x\in \GN}\sum_{z\in \GM\setminus\{0\}}\sum_{i=1}^k\sum_{j=1}^k
e\left( z\Phi(x)(y_i-y_j)\right) .
\end{align*}
Therefore, again interchanging orders of summation, we find that
\begin{align}
k^2&\le q^M\sum_{z \in \GM\setminus\{0\}}\sum_{i=1}^k \sum_{j=1}^k \frac{1}{q^N}
\sum_{x\in \GN}e\left( z\Phi(x)(y_i-y_j)\right) \notag \\
&\le q^{2M}\max_{z \in \GM\setminus\{0\}}\sum_{i=1}^k \sum_{j=1}^k 
\Theta(z;y_i-y_j),\label{eq:limit2}
\end{align}
where
\begin{equation}\label{w23}
\Theta (z;u)=\limsup_{N\rightarrow \infty}\biggl| \frac{1}{q^N}\sum_{x\in \GN}
e\left( z\Phi(x)u\right) \biggr| .
\end{equation}

\par We now analyse the limit $\Theta(z;y_i-y_j)$ when $z\in \GM\setminus \{0\}$, with the 
result depending on whether or not $y_i - y_j$ is rational.

\noindent \textbf{Case 1.} Suppose that $i=j$. Then we find from \eqref{w23} that 
$\Theta(z;y_i-y_j)=\Theta(z;0)=1$.

\noindent \textbf{Case 2.} Suppose that $y_i-y_j$ is irrational. In this scenario, when 
$z\in \GM\setminus \{0\}$, we find that $z(y_i-y_j)$ is also irrational, and hence it follows 
from Theorem \ref{th:main3} that $\Theta(z;y_i-y_j)=0$.

\noindent \textbf{Case 3.} Suppose that $y_i-y_j$ is a non-zero rational. In these 
circumstances, we write $y_i-y_j=a/g$ as a reduced fraction with $a\in \Fq[t]$ and monic 
$g\in \Fq[t]$. Given $z\in \GM\setminus\{0\}$, we may in turn write 
$z(y_i-y_j)=a'/g'$ as a reduced fraction with $g'=g/(z,g)$ and $a'=az/(z,g)$. In particular, 
therefore, we have $|g'|\geq |g|/q^M$. We now recall \eqref{w23} and appeal to Lemma 
\ref{lem:hua}. Thus, there exists a constant $C_k>1$, depending only on $k$, $\calK$ and 
$q$, such that
\[
\Theta(z;y_i-y_j)=\frac{1}{|g'|}\biggl| \sum_{x\in \G_{\ord g'}}
e\biggl( \frac{a'\Phi(x)}{g'}\biggr) \biggr| \ll_{\calK} |g'|^{- 1/(2C_k)}|(g',a'a_k)|^{1/C_k}.
\]
On noting that $(g',a')=1$, we deduce that
\begin{equation}\label{eq:huaapp}
\Theta(z;y_i-y_j)\ll_\calK |g'|^{-1/(2C_k)}|a_k|^{1/C_k} \ll_\Phi |g|^{-1/(2C_k)}q^M. 
\end{equation}

\par For each monic $g\in \Fq[t]\setminus\{0\}$, denote by $\widetilde{h}_g$ the number 
of pairs $(i,j)$ with $1\leq i,j\leq k$ and $i\neq j$ satisfying the condition that $y_i-y_j$ may 
be written as a reduced fraction with denominator $g$. Then it follows from 
\eqref{eq:limit2} via \eqref{eq:huaapp} and the above analysis dividing into three cases 
that we have the estimate
\begin{equation}\label{eq:htilde}
k^2\ll_{\Phi} kq^{2M}+q^{3M}
\sum_{\substack{g\in \Fq[t] \\ \text{$g$ monic}}}|g|^{-1/(2C_k)}
\widetilde{h}_g.
\end{equation}

\par Next we estimate the right hand side of (\ref{eq:htilde}) using Lemma \ref{lem:h1}. 
For any $L\in \Z^+$, let 
\[
\widetilde{H}_L=\sum_{\substack{g\in \GL \\ \text{$g$ monic}}}\widetilde{h}_g.
\] 
On noting that $\widetilde{H}_1=0$, we find by partial summation that
\begin{align}
\sum_{\substack{g\in \Fq[t] \\ \text{$g$ monic}}}|g|^{-1/(2C_k)}
\widetilde{h}_g&=\sum_{L=1}^\infty q^{-L/(2C_k)}\bigl( 
\widetilde{H}_{L+1}-\widetilde{H}_L\bigr) \notag \\
&=\sum_{L=2}^\infty \widetilde{H}_L\bigl( q^{-(L-1)/(2C_k)}-q^{-L/(2C_k)}\bigr) .
\label{w24}
\end{align}
For any non-negative integer $L$, we have the trivial estimate $\widetilde{H}_L\leq k^2$. 
Meanwhile, as a consequence of Lemma \ref{lem:h1}, we have 
$\widetilde{H}_L\leq H_L\leq k q^{2L}$. Write $L_0=\left\lfloor (\log_q k)/2\right\rfloor $. 
Then
\begin{align*}
\sum_{L=2}^{L_0}\widetilde{H}_L\bigl( q^{-(L-1)/(2C_k)}-q^{-L/(2C_k)}\bigr) &\le 
k\sum_{L=2}^{L_0}q^{2L}\bigl( q^{-(L-1)/(2C_k)}-q^{-L/(2C_k)}\bigr) \\
&\le 2kq^{1+L_0\left( 2-1/(2C_k)\right)}
\end{align*}
and
\begin{align*}
\sum_{L=L_0+1}^\infty \widetilde{H}_L\bigl( q^{-(L-1)/(2C_k)}-q^{-L/(2C_k)}\bigr) &\le 
k^2\sum_{L=L_0+1}^\infty \bigl( q^{-(L-1)/(2C_k)}-q^{-L/(2C_k)}\bigr) \\
&\le k^2q^{-L_0/(2C_k)}.
\end{align*}
On recalling that $L_0=\left\lfloor (\log_q k)/2\right\rfloor $ and substituting these bounds 
into \eqref{w24}, we see that
\[
\sum_{\substack{g\in \Fq[t] \\ \text{$g$ monic}}}|g|^{-1/(2C_k)}
\widetilde{h}_g\le 3qk^{2-1/(4C_k)}.
\]
Equipped with this estimate, the relation \eqref{eq:htilde} now yields the bound 
\[
k^2\ll_\Phi kq^{2M} + q^{3M+1}k^{2-1/(4C_k)},
\]
and thus $|Y|=k\ll_\Phi q^{4C_k (3M+1)}$. In view of our opening discussion, this 
completes the proof of Theorem \ref{th:glasner2}.
\end{proof}

\end{document}